\newtheorem{theorem}{Theorem}[section]
\newtheorem{lemma}{Lemma}
\newtheorem{question}{Question}
\newtheorem*{definition}{Definition}
\newtheorem{conjecture}{Conjecture}
\newcommand{\be}{\begin{equation}}
\newcommand{\ee}{\end{equation}}
\newcommand{\bea}{\begin{eqnarray}}
\newcommand{\eea}{\end{eqnarray}}
\newcommand{\vs}{\vspace{0.5cm}}
\newcommand{\vsss}{\vspace{0.35cm}}
\newcommand{\vsv}{\vspace{0.12cm}}
\def\XXint#1#2#3{{\setbox0=\hbox{$#1{#2#3}{\int}$ }
\vcenter{\hbox{$#2#3$ }}\kern-.6\wd0}}
\begin{document}

\title{On Curvature Tensors of Hermitian Manifolds}
\author{Bo Yang} \thanks{Research partially supported by an AMS-Simons Travel
Grant}

\address{Bo Yang. Department of Mathematics, Rutgers University,110 Frelinghuysen Road
Piscataway, NJ 08854, USA.} \email{{boyang@math.rutgers.edu} }

\curraddr{Bo Yang. Department of Mathematics, 310 Malott Hall,
Cornell University, Ithaca, NY 14853-4201, USA.}
\email{{boyang@math.cornell.edu} }

\author{Fangyang Zheng} \thanks{Research partially supported by NSFC 11271320}
\address{Fangyang Zheng. Center for Mathematical Sciences, Zhejiang University,
Hangzhou, 310027, Zhejiang, China and Department of Mathematics, The
Ohio State University, 231 West 18th Avenue, Columbus, OH 43210,
USA} \email{{zheng.31@osu.edu}}

\subjclass[2010]{53C55 (Primary)}

\keywords{Hermitian manifolds, K\"{a}hler manifolds, balanced
metrics.}

\begin{abstract}
In this article, we examine the behavior of the Riemannian and
Hermitian curvature tensors of a Hermitian metric, when one of the
curvature tensors obeys all the symmetry conditions of the curvature
tensor of a K\"ahler metric. We will call such metrics
G-K\"ahler-like or K\"ahler-like, for lack of better terminologies.
Such metrics are always balanced when the manifold is compact, so in
a way they are more special than balanced metrics, which drew a lot
of attention in the study of non-K\"ahler Calabi-Yau manifolds. In
particular we derive various formulas on the difference between the
Riemannian and Hermitian curvature tensors in terms of the torsion
of the Hermitian connection. We believe that these formulas could
lead to further applications in the study of Hermitian geometry with
curvature assumptions.

\end{abstract}

\maketitle

\tableofcontents

\markleft{On Curvature Tensors of Hermitian Manifolds}
\markright{On Curvature Tensors of Hermitian Manifolds}

\section{Introduction}

In recent years, there has been much progress in the geometric
analysis of Hermitian manifolds, with the intent of pushing analysis
on K\"ahler manifolds to general Hermitian ones, and also with the
study of non-K\"ahler Calabi-Yau manifolds from string theory. See
for instance the work of Fu-Yau \cite{Fu-Yau}, Fu-Li-Yau
\cite{Fu-Li-Yau}, Fu \cite{Fu}, Fu-Wang-Wu \cite{Fu-Wang-Wu},
\cite{Fu-Wang-Wu1}, Liu-Yang \cite{Liu-Yang}, \cite{Liu-Yang1},
\cite{Liu-Yang2}, Streets-Tian \cite{StreetsTian}, Tosatti-Weinkove
\cite{Tosatti-Weinkove}, \cite{Tosatti}, Guan-Sun \cite{Guan-Sun},
and the references therein.

\vsv

Given a complex manifold $M^n$, a Hermitian metric $g$ is just a
Riemannian metric such that the almost complex structure $J$
preserves the metric. There are two canonical connections associated
with the metric, namely, the Hermitian (aka Chern) connection
$\nabla^h$ and the Riemannian (aka Levi-Civita) connection $\nabla
$. The first is the unique connection that is compatible with both
the metric and the complex structure, while the second is the only
torsion-free connection that is compatible with the metric. Let us
denote by $R^h$ and $R$ the curvature tensor of these two
connections.

\vsv

Here $R$ is just the Riemannian curvature tensor, and we extend it
linearly over ${\mathbb C}$. Both $R^h$ and $R$ are anti-symmetric
with respect to their first two or last two positions, and they are
both real operators. $R$ is also symmetric when its first two and
last two positions are interchanged, and satisfies the Bianchi
identity which means that when one positions is held fixed while the
other three are cyclicly permuted, the sum is always zero.

\vsv

Under the decomposition $TM\otimes {\mathbb C}=T^{1,0}M \oplus
T^{0,1}M$, all the components of $R^h$ vanish except
$R^h_{X\overline{Y}Z\overline{W}}$ (plus the obvious variation by
the skew-symmetries with respect to the first two or last two
positions), where $X$, $Y$, $Z$ and $W$ are type $(1,0)$ tangent
vectors. But in general it is not symmetric with respect to its
first and third (or its second and fourth) positions.

\vsv

When $g$ is K\"ahler, which means $\nabla^h=\nabla $, we have
$R^h=R$. In this case, the only non-trivial components are
$R_{X\overline{Y}Z\overline{W}}$, and
$R_{X\overline{Y}Z\overline{W}} = R_{Z\overline{Y}X\overline{W}}$.
For a general Hermitian metric, however, components
$R_{XYZ\overline{W}}$ and $R_{XY\overline{Z}\overline{W}}$ might be
non-zero, and $R$ may not be symmetric with respect to its first and
third (or second and fourth) positions. The only known condition is
that
$R_{XYZW}=R_{\overline{X}\overline{Y}\overline{Z}\overline{W}}=0$,
discovered by Gray in \cite{Gray} (see also formula (19) in \S 3).

\vsv

Of course when the metric $g$ is K\"ahler, one has  $R^h=R$. So a
naive question is, when a Hermitian metric $g$ satisfies $R^h=R$,
must it be K\"ahler? We could not seem to find an answer to this
question in the literature, to our surprise, so we took it to our
own hands and the first result in this article is simply to give a
positive answer to this question. That is, we have the following:

\begin{theorem} \label{result1} Given a Hermitian manifold $(M^n,g)$,
if its Riemannian curvature tensor $R$ and its Hermitian curvature
tensor $R^h$ are equal, then $g$ is K\"ahler.
\end{theorem}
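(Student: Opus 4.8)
The plan is to exploit the difference tensor between the two connections. Write $\nabla = \nabla^h + \gamma$, where $\gamma$ is a $(1,2)$-tensor (the "contorsion"). Since $\nabla^h$ is the Chern connection, its torsion $T^h$ is a section of $\Lambda^{2,0}\otimes T^{1,0}$ (the $(1,1)$-part of the torsion vanishes), whereas $\nabla$ is torsion-free. So the antisymmetric part of $\gamma$ in its two lower indices is exactly $-\tfrac12 T^h$ (suitably interpreted), and the fact that both connections are metric-compatible forces $\gamma$ to be skew in its "upper vs. one lower" pair after lowering an index, i.e. $\gamma_{i\bar j k} = -\overline{\gamma_{j\bar i \bar k}}$ type relations. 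Combining metric compatibility of both connections with the torsion data, one sees that $\gamma$ is completely determined by $T^h$; this is a standard computation and I would just record the explicit formula, writing $\gamma$ purely in terms of the components $T^h_{ijk}$ (and conjugates) in a local unitary frame.

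Next I would substitute into the standard identity relating the two curvatures:
\begin{equation}
R = R^h + \nabla^h\gamma + \gamma\wedge\gamma
\end{equation}
(schematically; more precisely $R(X,Y) = R^h(X,Y) + (\nabla^h_X\gamma)_Y - (\nabla^h_Y\gamma)_X + [\gamma_X,\gamma_Y]$). The hypothesis $R = R^h$ then reads $(\nabla^h_X\gamma)_Y - (\nabla^h_Y\gamma)_X + [\gamma_X,\gamma_Y] = 0$ as a tensor identity. The strategy is now to extract from this, by taking suitable type components (i.e. plugging in $(1,0)$ and $(0,1)$ vectors) and using the symmetries of $R$ and $R^h$ recalled in the introduction, enough algebraic constraints to conclude $\gamma = 0$, equivalently $T^h = 0$, equivalently $g$ is Kähler.

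Concretely, I expect the cleanest route is: take the $(2,0)$-with-respect-to-$(X,Y)$ component, i.e. evaluate on $X, Y$ of type $(1,0)$ and $Z,\bar W$. On the left, $R_{XYZ\bar W}$ need not vanish a priori, but $R^h_{XYZ\bar W} = 0$ since $R^h$ has only $(1,1)$-type in its first two slots. So the hypothesis gives $R_{XYZ\bar W} = 0$, and more importantly the Bianchi identity for $R$ (which $R^h$ does not satisfy) becomes an extra equation. Cyclic-permuting $X, Y, Z$ in the first Bianchi identity, and using $R = R^h$ together with the symmetry $R^h_{X\bar Y Z \bar W} = R^h_{Z\bar Y X \bar W} + (\text{torsion terms})$ — which is precisely one of the "various formulas" the abstract promises and which I would have available from the earlier sections — should collapse to an algebraic identity purely quadratic and linear in $T^h$. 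The expected outcome is something like $\sum |T^h|^2 = 0$ after a trace, forcing $T^h \equiv 0$.

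The main obstacle I anticipate is bookkeeping: correctly tracking the type decomposition and the torsion correction terms in the curvature symmetries, so that the linear-in-$\nabla^h T^h$ terms cancel and one is left with a manifestly nonnegative quadratic expression in $T^h$. A secondary subtlety is that the first Bianchi identity for $R$, when both curvatures are set equal, might only yield the vanishing of a contraction of $T^h$ rather than $T^h$ itself; if so, I would supplement it by also using the symmetry of $R$ under swapping the first pair with the second pair (another property $R^h$ lacks), which should give the complementary constraints. Once $T^h = 0$ is established, $\nabla^h$ is torsion-free and metric, hence equals $\nabla$, so $g$ is Kähler by definition, completing the proof.
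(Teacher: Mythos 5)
Your plan follows essentially the same route as the paper's proof: the authors' Lemma 7 records exactly the formulas you propose to derive (the components of $R-R^h$ in a unitary frame as $\nabla^h T$ plus quadratic expressions in $T$), the linear $\nabla^h T$ terms are killed exactly as you anticipate (the symmetry defect $2T^k_{ij,\overline{l}}=R^h_{j\overline{l}i\overline{k}}-R^h_{i\overline{l}j\overline{k}}$ combined with the Bianchi and pair symmetries inherited from $R=R^h$ forces $T^k_{ij,\overline{l}}=0$ at a point where the connection matrix vanishes), and tracing the remaining quadratic identity yields $\|T\|^2=2\|T\|^2$, hence $T=0$. The only caveat is that you defer the decisive computation to an ``expected outcome''; it does work out, with the trace identity coming from the $(1,1)$-component $R_{k\overline{l}i\overline{j}}-R^h_{k\overline{l}i\overline{j}}$ (the paper's formula (24)) rather than from the $(2,0)$-component you flag as the cleanest entry point.
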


Next, we would like to know what will happen when both $R$ and $R^h$
satisfy all the symmetry conditions of the curvature tensor of a
K\"ahler metric. To make things precise, let us first introduce the
following notion:


\begin{definition} [\textbf{K\"ahler-like and G-K\"ahler-like}]

A Hermitian metric $g$ will be called {\em K\"ahler-like,} if
$R^h_{X\overline{Y}Z\overline{W}}=R_{Z\overline{Y}X\overline{W}}^h$
holds for any type $(1,0)$ tangent vectors $X$, $Y$, $Z$, and $W$.
Similarly, if $R_{XY\overline{Z}\overline{W}}=R_{XYZ\overline{W}}=0$
for any type $(1,0)$ tangent vectors $X$, $Y$, $Z$, and $W$, we will
say that $g$ is {\em Gray-K\"ahler-like}, or  {\em G-K\"ahler-like}
for short.
\end{definition}


Note that the above definition simply means that $R^h$ (or $R$)
satisfies all the symmetry conditions obeyed by the curvature
tensors of  K\"ahler manifolds. The G-K\"ahler-like condition was
first introduced by Gray in \cite{Gray} (condition (1) on p. 605).
\vsv

When $g$ is K\"ahler-like, by
taking complex conjugations, we see that $R^h$ is also symmetric
with respect to its second and fourth positions, thus obeying all
the symmetries of the curvature tensor of a K\"ahler metric.

\vsv

Similarly, when $g$ is G-K\"ahler-like, we have $R_{XY\ast \ast }=0$
by the aforementioned Gray's Theorem, so the only non-trivial
components of $R$ are in the form $R_{X\overline{Y}Z\overline{W}}$.
Also,  the vanishing of $R_{XZ\overline{Y}\overline{W}}$ plus the
first Bianchi identity imply that $R_{Z\overline{Y}X\overline{W}} =
R_{X\overline{Y}Z\overline{W}}$. So $R$ obeys all the symmetries of
the curvature tensor of a K\"ahler metric.

\vsv

Of course being K\"ahler-like or G-K\"ahler-like does not mean that
the metric $g$ will have to be K\"ahler. There are plenty of
non-K\"ahler Hermitian metrics $g$ which are K\"ahler-like or
G-K\"ahler-like. For instance, when $n\geq 2$, there are Hermitian
manifolds that are non-K\"ahler but with $R^h=0$ everywhere. Such
manifolds are certainly K\"ahler-like. In \cite{Boothby}, Boothby
showed that a compact Hermitian manifold with $R^h=0$ everywhere is
the quotient of a complex Lie group by a discrete subgroup. For
$n\geq 3$, there are such manifolds that are non-K\"ahler.

\vsv

As we shall see in later sections, one can explicitly write down
Hermitian metrics in dimension $n\geq 2$ that are K\"ahler-like or G-K\"ahler-like but
non-K\"ahler. The first compact, non-K\"ahler example of
G-K\"ahler-like manifolds was observed by Gray in \cite{Gray}. It is
the {\em Calabi threefolds}, a family of compact complex manifolds
of dimension 3 with $c_1=0$ that are diffeomorphic to the product of
a compact Riemann surface with a real $4$-torus, discovered by
Calabi in 1958 \cite{Calabi}. For the reader's convenience, we give
a sketch of Calabi's construction in \S 3.

\vsv

Theorem \ref{result1} implies that any Hermitian manifold $(M^n, g)$
satisfying $R=R^h$ is K\"ahler. Note that under the assumption of
$R=R^h$, the manifold is obviously both K\"ahler-like and
G-K\"ahler-like in view of the above definition. In light of this,
we raise the following natural question:


\begin{conjecture}  \label{conjecture 1}
If a Hermitian manifold $(M^n,g)$ is both K\"ahler-like and
G-K\"ahler-like, then $g$ must be K\"ahler.
\end{conjecture}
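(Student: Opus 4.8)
The plan is to derive the conjecture from Theorem~\ref{result1} by showing that the two hypotheses together force $R=R^{h}$. For any Hermitian metric the only components of $R^{h}$ that can be nonzero are the $R^{h}_{X\overline{Y}Z\overline{W}}$, and the K\"ahler-like hypothesis endows these with the full set of K\"ahler symmetries. Dually, as explained in the paragraphs preceding the conjecture, the G-K\"ahler-like hypothesis (via Gray's theorem and the first Bianchi identity) makes $R$ supported on its $R_{X\overline{Y}Z\overline{W}}$ components and gives it all the K\"ahler symmetries as well. Hence each of $R$ and $R^{h}$ is completely determined by its $(1,1)$-$(1,1)$ part, and it suffices to prove
$$R_{X\overline{Y}Z\overline{W}}=R^{h}_{X\overline{Y}Z\overline{W}}$$
for all type $(1,0)$ tangent vectors $X,Y,Z,W$; then $R=R^{h}$ and Theorem~\ref{result1} finishes the argument.

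To reach this identity we would use the formulas expressing $R-R^{h}$ through the torsion $T^{h}$ of the Chern connection and its first covariant derivative (in the spirit of Gray's formula (19), and among the main computations of this paper). Writing $\nabla=\nabla^{h}+\gamma$, where $\gamma$ is the difference tensor --- metric, skew-symmetric in its last two arguments, and linear in $T^{h}$ --- the curvature of $\nabla^{h}+\gamma$ is $R^{h}$ plus one term linear in $\nabla^{h}T^{h}$ and one quadratic in $T^{h}$; separating bidegrees gives, schematically, the $(2,1)$- and $(1,2)$-identities $R_{XYZ\overline{W}}=L_{1}(\nabla^{h}T^{h})+Q_{1}(T^{h},\overline{T^{h}})$ and $R_{XY\overline{Z}\overline{W}}=L_{2}(\nabla^{h}T^{h})+Q_{2}(T^{h},\overline{T^{h}})$, together with $R_{X\overline{Y}Z\overline{W}}-R^{h}_{X\overline{Y}Z\overline{W}}=L_{3}(\nabla^{h}T^{h},\overline{\nabla^{h}T^{h}})+Q_{3}(T^{h},\overline{T^{h}})$, the $L_{i}$ linear and the $Q_{i}$ quadratic in torsion. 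The G-K\"ahler-like hypothesis kills the left sides of the first two, turning them into \emph{algebraic} relations that express the relevant skew and trace parts of $\nabla^{h}T^{h}$ in terms of quadratic torsion; the K\"ahler-like hypothesis, together with the $X\leftrightarrow Z$ symmetry of $R$ established above, forces the right side of the third identity to be $X\leftrightarrow Z$ symmetric, which is one further relation. Feeding the first relations into the third, we expect the first-order terms to cancel, so that $R-R^{h}=\widetilde{Q}(T^{h},\overline{T^{h}})$ becomes purely quadratic in $T^{h}$; if the remaining symmetry and Bianchi constraints force $\widetilde{Q}\equiv 0$, then $R=R^{h}$ and we are done.

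The step we expect to be genuinely hard is exactly this last one: showing that $\widetilde{Q}$ vanishes --- equivalently, that the leftover quadratic torsion terms are annihilated by the $X\leftrightarrow Z$ symmetry and the Bianchi identities of $R$ and $R^{h}$ --- with no compactness available, so that Stokes' theorem cannot be invoked and the vanishing must be purely pointwise. On a compact manifold the picture is far more favourable: by the other results of this paper the metric is already balanced, so the torsion $(1,0)$-form $\eta$ (the contraction of $T^{h}$) is co-closed; if one also shows $\eta$ is closed, Hodge theory gives $\eta=0$, and a further differentiation together with a Bochner/Weitzenb\"ock argument should propagate this to all of $T^{h}$, giving K\"ahler. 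For $n=2$ the compact case is immediate, since a compact balanced complex surface is automatically K\"ahler, and $n=3$ should be accessible by explicit computation, with the Calabi threefolds as the model to test against. We would therefore settle the compact case and the small dimensions first, the general non-compact statement being the principal obstacle, hinging on whether $\widetilde{Q}$ admits a purely pointwise, sign-definite control.
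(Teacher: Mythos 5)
First, note that the statement you were asked to prove is posed as a \emph{conjecture} in the paper: the authors establish it only when $M^n$ is compact or $n\leq 3$ (Theorem~\ref{result2}) and say explicitly that the general case is open, so no complete proof exists to compare against. Your reduction to proving $R=R^h$ is logically valid but buys nothing: a K\"ahler metric satisfies $R=R^h$, and by Theorem~\ref{result1} the identity $R=R^h$ forces K\"ahler, so your intermediate goal $R_{X\overline{Y}Z\overline{W}}=R^h_{X\overline{Y}Z\overline{W}}$ is exactly equivalent to the conjecture itself. Your structural analysis is correct as far as it goes: under the K\"ahler-like hypothesis one has $T^k_{ij,\overline{l}}=0$ at a point where $\theta$ vanishes (this is (\ref{formula 21})), so by (\ref{formula 24}) the difference $R-R^h$ on the $(1,1)$--$(1,1)$ components is indeed purely quadratic in torsion; but the resulting $\widetilde{Q}=\sum_r\{T^r_{ik}\overline{T^r_{jl}}-T^j_{rk}\overline{T^i_{rl}}-T^l_{ri}\overline{T^k_{rj}}\}$ has no apparent reason to vanish pointwise, and you correctly identify this as the obstacle. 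What the paper actually does for its partial result is different: it extracts a purely quadratic identity, (\ref{formula 25}), from the vanishing of $R_{ij\overline{k}\overline{l}}$ in (\ref{formula 23}) --- i.e.\ from the G-K\"ahler-like hypothesis applied to the $(0,2)$ slots rather than the $(1,1)$ slots --- and then kills the torsion directly from this algebraic relation together with (\ref{formula 22}), never passing through $R=R^h$.

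Your sketch of the compact case also has a concrete gap. Balancedness \emph{is} $\eta=0$ by (\ref{formula 15}), so there is no Hodge-theoretic step to perform there, and for $n\geq 3$ the vanishing of the trace $\eta$ says very little about $T$. More importantly, the Bochner argument (applied in the paper to $f=\|T\|^2$ using $T^k_{ij,\overline{l}}=0$ and (\ref{formula 33})) yields only that the torsion is parallel, $T^k_{ij,l}=0$, together with the commutation identity (\ref{formula 34}); parallel torsion does not ``propagate'' to zero torsion. The paper needs a further, purely algebraic step: (\ref{formula 34}) says the matrices $A_X$ built from the torsion satisfy $A_X^2=0$ and $A_XA_Y=-A_YA_X$, a common null vector is produced by an induction on such anticommuting nilpotent systems (Claim 2 in \S 4), and then (\ref{formula 27}) is applied iteratively to annihilate the torsion components one index at a time. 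None of this appears in your outline, and it is precisely the content that converts ``parallel torsion'' into ``zero torsion.'' Likewise, the $n=3$ case in the paper is a bare-hands manipulation of the components $a_i=T^i_{jk}$, $b_i=T^j_{ij}$ via (\ref{formula 29})--(\ref{formula 31}), not a curvature identity; your proposal leaves that computation entirely unspecified.
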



At this point, we could not seem to establish a proof to this
conjecture in its full generality. However, we are able to prove a
partial result, which could be serving as a piece of supporting
evidence. To be precise,  we have the following:

\begin{theorem} \label{result2}
Let $(M^n,g)$ be a Hermitian manifold  that is both K\"ahler-like
and G-K\"ahler-like. If either $M^n$ is compact or $n\leq 3$,  then
$g$ is K\"ahler.
\end{theorem}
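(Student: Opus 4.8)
The plan is to translate the two curvature-symmetry hypotheses into pointwise identities on the torsion tensor $T$ of the Chern connection, and then show these force $T=0$. I would begin by fixing a point $p$ and a local unitary frame $\{e_i\}$ of $T^{1,0}M$, and writing down the comparison formula expressing $R$ in terms of $R^h$, the torsion $T^i_{jk}$ and its covariant derivatives — the Chern connection differs from the Levi-Civita connection by a tensor built from $T$, and the curvature of the former exceeds that of the latter by a quadratic expression in $T$ plus $\nabla^h T$ terms. Such formulas are essentially classical (and will presumably be derived in an earlier section of the paper, so I may quote them); the key components to track are $R_{i\bar{j}k\bar{l}}$, $R_{i j k\bar{l}}$ and $R_{ij\bar{k}\bar{l}}$ written in terms of $R^h_{i\bar{j}k\bar{l}}$ and the torsion.

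Next I would impose the G-K\"ahler-like hypothesis, i.e. $R_{ijk\bar{l}}=R_{ij\bar{k}\bar{l}}=0$. Feeding the comparison formulas into these vanishing conditions produces first-order PDE-type relations: schematically, the $(2,2)$-type curvature component $R_{ij\bar{k}\bar{l}}$ equals a term linear in $\nabla^h T$ plus a term quadratic in $T$, and its vanishing gives an expression for a certain anti-symmetrized covariant derivative of $T$ in terms of $T\ast T$. Similarly the vanishing of $R_{ijk\bar{l}}$ gives another batch of relations. Then imposing the K\"ahler-like hypothesis $R^h_{i\bar{j}k\bar{l}}=R^h_{k\bar{j}i\bar{l}}$ — which by the first Bianchi identity for the Chern connection is equivalent to a statement that $\nabla^h T$ is symmetric in an appropriate pair of indices — gives a third batch. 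Combining all three, I expect to derive that $\nabla^h T$ is, at $p$, completely determined by $T\ast T$, and moreover that the $T\ast T$ terms satisfy enough algebraic symmetry to conclude.

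To finish, I would split into the two cases of the hypothesis. For $n\le 3$ the idea is purely algebraic/pointwise: the torsion $T$ of type $(2,0)\to(1,0)$ has very few independent components in low dimension (for $n=2$ it is essentially the Lee form; for $n=3$ one can still enumerate), and the algebraic relations forced above — particularly any relation of the form ``$T\ast T$ is both symmetric and anti-symmetric in some index pair'' — should collapse to $T=0$ after a short case analysis. For $M$ compact, the strategy is instead global: derive from the above a Bochner-type or divergence identity — integrate the trace of one of the curvature relations against the volume form, using that $\int_M \langle \nabla^h T, \nabla^h T\rangle \ge 0$ with equality iff $\nabla^h T=0$, and that the quadratic $T\ast T$ terms integrate to something controllable (here the already-known fact that K\"ahler-like or G-K\"ahler-like metrics are balanced, i.e. the torsion trace $\sum_i T^i_{ik}=0$, should be used to kill boundary-type terms). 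This should yield $\int_M |T|^2 \le 0$ or $\int_M |\nabla^h T|^2 = -c\int_M |T\ast T|^2$ with a sign that forces $T\equiv 0$, hence $g$ K\"ahler.

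The main obstacle I anticipate is the compact case: getting the integrated identity to have a definite sign. The pointwise relations will express $\nabla^h T$ in terms of $T\ast T$, but bounding or sign-controlling the resulting fourth-order-in-$T$ integral is delicate, and this is presumably exactly why the conjecture is open in general — the low-dimensional restriction $n\le 3$ is what makes the algebra close, and compactness substitutes for it by supplying an integration by parts that discards an otherwise-uncontrolled term. I would therefore spend most of the effort making the Bochner computation precise: identifying which contraction of the curvature comparison formula, when integrated, produces $\|\nabla^h T\|^2_{L^2}$ on one side and a manifestly non-positive (or non-negative) expression on the other, and verifying that the balanced condition is exactly what is needed to discard the divergence term.
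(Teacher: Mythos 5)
Your overall framing --- translate both symmetry hypotheses into identities on the Chern torsion $T^k_{ij}$ and its $\nabla^h$-derivatives, then kill $T$ --- matches the paper's setup, and your low-dimensional case is in the right spirit: the paper indeed reduces $n=3$ to a short case analysis on the six quantities $a_i=T^i_{jk}$, $b_i=T^j_{ij}$ using the balanced condition and three quadratic relations coming from $R_{ijk\bar l}=0$ and the combined identity $2\sum_r T^r_{ij}\overline{T^r_{kl}}=\sum_r\{T^l_{ri}\overline{T^j_{rk}}+T^k_{rj}\overline{T^i_{rl}}-T^k_{ri}\overline{T^j_{rl}}-T^l_{rj}\overline{T^i_{rk}}\}$. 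You leave that case analysis unexecuted, but the route is the paper's route.

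The genuine gap is in the compact case. You expect a single Bochner/integration-by-parts identity to produce $\int_M|T|^2\le 0$, and you correctly sense that the sign will not come out; but the resolution is not a more careful sign argument. In the paper, the Kähler-like hypothesis gives $T^k_{ij,\bar l}=0$ and the G-Kähler-like hypothesis gives $T^k_{ij,l}=\sum_r\{-T^k_{ri}T^r_{jl}+T^k_{rj}T^r_{il}\}$; applying Bochner's lemma to $f=\sum|T^k_{ij}|^2$ then yields only $T^k_{ij,l}=0$, i.e.\ that $T$ is $\nabla^h$-parallel, together with the purely algebraic consequence $\sum_r T^k_{ri}T^r_{jl}=\sum_r T^k_{rj}T^r_{il}$. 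At that point the analysis is over and $T$ is still not known to vanish. The missing idea --- and the heart of the compact case --- is linear-algebraic: the matrices $A_X=(\sum_iX_iT^k_{ij})_{j,k}$ satisfy $A_X^2=0$ and $A_XA_Y=-A_YA_X$, and one must prove that any such anticommuting family of square-zero matrices has a nontrivial common kernel (the paper gives an induction on $n$ and also a constructive proof). A common kernel vector, fed back into the diagonal identity $2\sum_r|T^r_{ij}|^2=\sum_r\{|T^i_{rj}|^2+|T^j_{ri}|^2-2\mathrm{Re}(\overline{T^i_{ri}}T^j_{rj})\}$, propagates the vanishing one index at a time until $T=0$. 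Your proposal as written would stall at $\nabla^hT=0$ with no mechanism to conclude.
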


Note that K\"ahler-like or G-K\"ahler-like metrics provide important
classes of special Hermitian metrics. When the manifold is compact,
either condition would imply that the metric is balanced, namely,
$d(\omega^{n-1})=0$, where $\omega$ is the K\"ahler form. So each
type is more special than being balanced for compact manifolds.
More specifically, we have the following:

\begin{theorem} \label{balancedresult}
Let $(M^n,g)$ be a compact Hermitian manifold. If it is  either
K\"ahler-like or G-K\"ahler-like, then it must be balanced.
\footnote{An anonymous referee kindly brought to our attention that
Corollary 4.5 of \cite{Liu-Yang1} also implies that any compact
G-K\"ahler like manifold is balanced.}
\end{theorem}

In particular, on compact complex surfaces, K\"ahler-like or
G-K\"ahler-like metrics are  K\"ahler. In fact it was already
observed in \cite{Vaisman} that any compact G-K\"ahler-like
Hermitian surface is K\"ahler. Note that the completeness assumption
of the Hermitian metric plays an important role in view of the
example of noncompact G-K\"ahler-like and non-K\"{a}hler surface we
constructed in \S 3. In dimension $3$ or higher, there are examples
of compact non-K\"ahler manifolds that are K\"ahler-like or
G-K\"ahler-like, e.g., the Iwasawa threefold is Hermitian flat
(namely with vanishing $R^h$) thus K\"ahler-like; while the Calabi
threefolds are G-K\"ahler-like. Note that the Calabi threefolds also
have vanishing first Chern class. It would be a very interesting
question to classify all compact three dimensional non-K\"ahler
Hermitian manifolds that are K\"ahler-like or G-K\"ahler-like,
especially for Calabi-Yau threefolds (namely those with trivial
canonical bundle and finite fundamental group).

\vsv

In a larger context, recall that balanced metrics play an important
role in the Strominger system (\cite{Strominger}, \cite{Li-Yau},
\cite{Fu-Yau}). Mathematically, it is also intriguing to understand
the moduli space of Calabi-Yau threefolds (\cite{Reid} and
\cite{Tseng-Yau}). From Theorem \ref{balancedresult} we know that
K\"ahler-like or G-K\"ahler-like metrics on closed Hermitian
manifolds are more special than balanced ones. It might be
interesting to know if K\"ahler-like or G-K\"ahler-like metrics on
compact non-K\"ahlerian Calabi-Yau threefolds can play a role in the
study of Strominger system or the understanding of the moduli space
of Calabi-Yau threefolds.

\vsv

Next, let us consider the behavior of the K\"ahler-like or
G-K\"ahler-like condition under conformal changes. Since balanced
metrics are clearly unique (up to constant multiples) within each
conformal class, by Theorem \ref{balancedresult} we know that in the
compact case there can be at most one such metric within each
conformal class. In the non-compact case, one can write down the
equations and conclude that:

\begin{theorem} \label{conformalchange}
On a compact complex manifold $M^n$, each conformal class of
Hermitian metrics contains at most one metric (up to constant
multiples) that is K\"ahler-like or G-K\"ahler-like. For $(M^n,g)$
non-compact and  $\tilde{g}=e^{2u}g$ with $u\in C^{\infty }(M,
{\mathbb R})$,

1) if $g$ is K\"ahler-like, then $\tilde{g}$ is K\"ahler-like if
and only if $\partial \overline{\partial } u=0$;

2) if $g$ is G-K\"ahler-like, then $\tilde{g}$ is G-K\"ahler-like if and only if the function $\lambda =e^{-u}$ satisfies:
$ H_{\lambda} (X,Y)=0$ and $ \ \lambda H_{\lambda} (X,\overline{Y})= \langle X, \overline{Y}\rangle |\nabla \lambda |^2 $
for any type $(1,0)$ tangent vectors $X$ and $Y$. Here $ H_{\lambda}$ is the Hessian of $\lambda$.

In particular, $\lambda \Delta \lambda = n|\nabla \lambda |^2$ and$\
\Delta e^{(n-1)u} =0$.
\end{theorem}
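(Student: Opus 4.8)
The plan is to analyze the conformal change $\tilde g = e^{2u}g$ by writing down how the two canonical connections transform. The Chern connection has a clean conformal transformation law, so the Hermitian curvature changes by $R^h_{\tilde g} = R^h_g - (\partial\bar\partial u)\otimes(\text{metric terms}) - \dots$; more precisely $\tilde\omega^h = \omega^h$ on the $(1,0)$-part and the curvature form picks up a term $\partial\bar\partial u \cdot \langle\,\cdot\,,\,\cdot\,\rangle$. First I would derive, in a local unitary-type frame, the identity
\[
R^h_{\tilde g}{}_{X\bar Y Z\bar W} = e^{2u}\Big( R^h_{g}{}_{X\bar Y Z\bar W} - \big(\partial\bar\partial u\big)(Z,\bar Y)\,\langle X,\bar W\rangle - \dots \Big),
\]
(being careful about which slots carry the Hessian of $u$). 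Imposing the K\"ahler-like symmetry $R^h_{\tilde g}{}_{X\bar Y Z\bar W} = R^h_{\tilde g}{}_{Z\bar Y X\bar W}$ and using that $R^h_g$ already has it, the surviving obstruction is the antisymmetrization in $X\leftrightarrow Z$ of the extra terms, which collapses to a multiple of $\big(\partial\bar\partial u\big)(Z,\bar Y)\langle X,\bar W\rangle - \big(\partial\bar\partial u\big)(X,\bar Y)\langle Z,\bar W\rangle$; tracing or specializing the free vectors forces $\partial\bar\partial u = 0$. This gives part (1).

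For part (2), the relevant object is the torsion of the Chern connection (equivalently $\partial\omega$), since the G-K\"ahler-like condition is about the vanishing of the $(1,0)$-heavy components $R_{XYZ\bar W}$ and $R_{XY\bar Z\bar W}$ of the \emph{Riemannian} curvature. I would use the formulas (promised in the introduction, hence assumable here) expressing $R - R^h$ in terms of the torsion $T$ of $\nabla^h$, together with the conformal transformation law for that torsion: under $\tilde g = e^{2u}g$ one has $\tilde T = T + (\bar\partial u)\wedge(\cdot)$ in the appropriate sense. Writing $\lambda = e^{-u}$ converts the exponential nonlinearity into Hessian terms: $\nabla^2(e^{-u})$ involves $-e^{-u}(\nabla^2 u - du\otimes du)$, which is precisely why the stated condition is phrased through $H_\lambda$. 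The two equations $H_\lambda(X,Y)=0$ and $\lambda H_\lambda(X,\bar Y) = \langle X,\bar Y\rangle|\nabla\lambda|^2$ should emerge respectively from (a) the vanishing of the $(2,0)$-in-first-two-slots part — governed by $\partial\partial$-type second derivatives of $\lambda$, matching $H_\lambda(X,Y)$ — and (b) the forced symmetry/vanishing among the mixed components, which after subtracting the already-vanishing $g$-terms reduces to a pointwise identity relating $\mathrm{tr}$-free and trace parts of $H_\lambda$.

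The two scalar consequences then follow by tracing: taking $Y = X$ in the second equation and summing over a unitary frame gives $\lambda\,\Delta\lambda = n|\nabla\lambda|^2$ (the complex Laplacian equals half the real one here, and the factor $n$ comes from $\sum\langle X_i,\bar X_i\rangle = n$). Finally, from $\lambda\Delta\lambda = n|\nabla\lambda|^2$ with $\lambda = e^{-u}$ one computes $\Delta(e^{(n-1)u}) = (n-1)e^{(n-1)u}\big((n-1)|\nabla u|^2 + \Delta u\big)$ and checks that the bracket vanishes: indeed $\lambda\Delta\lambda = n|\nabla\lambda|^2$ rewrites as $\Delta u = (n-1)|\nabla u|^2$... wait, as $-\Delta u + |\nabla u|^2 = n|\nabla u|^2$, i.e. $\Delta u = -( n-1)|\nabla u|^2$, giving $(n-1)|\nabla u|^2 + \Delta u = 0$ as needed. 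The compact statement is then immediate: a balanced metric minimizes (is the unique critical point of) the relevant conformal functional, or directly, $\Delta e^{(n-1)u}=0$ on a closed manifold forces $u$ constant; alternatively one invokes Theorem \ref{balancedresult} to say K\"ahler-like or G-K\"ahler-like metrics are balanced, and balanced metrics are unique up to scaling in a conformal class.

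I expect the main obstacle to be bookkeeping the conformal transformation of the \emph{Riemannian} curvature's non-standard components $R_{XYZ\bar W}$, $R_{XY\bar Z\bar W}$ cleanly — the Levi-Civita connection's conformal law is classical but expressing the result in the $J$-adapted decomposition and isolating exactly the piece that survives after using the $g$-side vanishing is delicate; choosing the substitution $\lambda = e^{-u}$ at the outset (rather than working with $u$) is what keeps the algebra from exploding, and getting the precise constant in the trace identity $\lambda\Delta\lambda = n|\nabla\lambda|^2$ requires care with the convention relating $\Delta$ to $\partial\bar\partial$.
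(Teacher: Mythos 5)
Your proposal is correct and follows essentially the same route as the paper: the compact case via Theorem \ref{balancedresult} and the uniqueness of balanced metrics within a conformal class, part (1) from the transformation $\tilde{\Theta}=\Theta-2\partial\overline{\partial}u\,I$ of the Chern curvature (whose antisymmetrization in the relevant slots forces $\partial\overline{\partial}u=0$ when $n\geq 2$), and part (2) by substituting $\lambda=e^{-u}$, reading off the two Hessian equations, and tracing to get $\lambda\Delta\lambda=n|\nabla\lambda|^2$ and $\Delta e^{(n-1)u}=0$. The only cosmetic differences are that the paper organizes the part-(2) computation through the conformal transformation of $\theta_2$ (Lemma \ref{lemma 11}) and the structure equation for $\Theta_2$ rather than through the torsion formulas of Lemma \ref{lemma 7}, and that the torsion correction should read $\partial u\wedge\varphi$ rather than $\overline{\partial}u$ (the Chern torsion is of type $(2,0)$); neither affects the argument.
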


In \cite{Liu-Yang}, \cite{Liu-Yang1}, and \cite{Liu-Yang2}, Liu and
Yang gave a detailed study of Hermitian manifolds and a thorough
analysis on the relationship of various Ricci tensors arising from
$R^h$ and $R$. Here we will introduce the right notion of Riemannian
bisectional curvature for $R$ and compare it with the Hermitian
bisectional curvature. The relationship between the two holomorphic
sectional curvatures is particularly simple, and obey a monotonicity
rule. See Theorem \ref{monotonicity} in \S 5 for more details.

Let us remark that an interesting aspect of our work is to derive
various formulas which characterize the difference between the
Riemannian and Hermitian curvature tensors in terms of the torsion
of Hermitian connection. We believe that these formulas could find
further applications in the study of Hermitian geometry with
curvature assumptions.

Next, we propose a natural question which should have been explored
before, but again we could not seem to find it in the literature. It
is well known that there are examples of non-K\"ahler Hermitian
manifolds with everywhere vanishing Hermitian curvature tensor
(i.e., with $R^h=0$ everywhere). In the compact case such manifolds
are all quotients of complex Lie groups, as proved in
\cite{Boothby}. Naturally one would wonder if one can classify
non-K\"ahler Hermitian manifolds with everywhere vanishing
Riemannian curvature tensor (i.e., with $R=0$ everywhere). We
propose the following

\begin{question} \label{riemannian flat}
Is there a characterization of Hermitian manifolds with vanishing
Riemannian curvature tensor? In the compact case, it amounts to
classify all compatible complex structures on the flat torus
$T^{2n}_{\mathbb R}$.
\end{question}

We will investigate Question \ref{riemannian flat} in a forthcoming
work.

The paper is organized as follows: In Section 2, we start from the
Cartan's structure equations and collect some preliminary results.
In Section 3, we discuss the Riemannian curvature tensor $R$ and the
Hermitian curvature tensor $R^h$ of a given Hermitian manifold, and
pay special attention to the cases when one or both of these
curvature tensors obey the symmetry conditions of the curvature
tensor of a K\"ahler manifold. In Section 4, we give proofs to
Theorems \ref{result1} and \ref{result2} stated in this section, and
in Section 5, we examine the uniqueness problem for such metrics
within a conformal class of Hermitian metrics. We also discuss the
notion of bisectional curvature for the Riemannian curvature tensor
$R$, and express the difference between the bisectional curvatures
in terms of a quadratic formula of the torsion tensor. In
particular, the holomorphic sectional curvatures of $R$ and $R^h$
obeys a simple monotonicity rule, with equality everywhere when and
only when the metric is K\"ahler.

\section{The structure equations of Hermitian manifolds}

\vsss

Let $(M^n,g)$ be a Hermitian manifold, with $n\geq 2$. We will
denote by $\nabla$ and $\nabla^h$ the Riemannian and Hermitian
connection of the metric $g$, and by $R$, $R^h$ their curvatures, called
the Riemannian or Hermitian curvature tensor, respectively.

\vsv

Let $A=\nabla - \nabla^h$ and denote by $T^h$ the torsion tensor of
$\nabla^h$:
$$ T^h(X,Y)=\nabla^h_XY - \nabla^h_YX - [X,Y] $$
for any two tangent vectors $X$, $Y$ on $M$. Since $\nabla$ is
torsion-free, the two tensors $A$ and $T^h$ are related by
\[
A_XY-A_YX = - T^h(X,Y).\]
So $T^h$ is the anti-symmetric part of
$A$. On the other hand, the compatibility of the connections with
the metric implies that
$$ \langle A_XY, Z\rangle + \langle A_YX,Z \rangle =
\langle X, T^h(Y,Z)\rangle + \langle Y, T^h(X,Z)\rangle $$
for any vector fields $X$, $Y$, and $Z$ on $M$. So $T^h$ completely
determines $A$. Here $\langle , \rangle $ is the (real) inner
product given by the Hermitian metric $g$.

\vsv

While the difference of $R$ and $R^h$ is given by $A$ and its first
covariant derivative, it seems to us that the torsion tensor $T^h$
would be easier to use in our context. Also, when the tangent frame
is chosen to be orthogonal (unitary), the dependence of $A$ on $T^h$
 takes the most convenient form. So we will use unitary coframes and
  focus on $T^h$ from now on.

\vsv

Let us complexify the tangent bundle and denote by $T^{1,0}M$ the
bundle of complex tangent vector fields of type $(1,0)$, namely,
complex vector fields in the form of $v- \sqrt{-1}Jv$, where $v$
is any real vector field on $M$.

\vsv

Suppose $\{ e_1, \ldots , e_n\}$ is a frame of $T^{1,0}M$ in a
neighborhood $M'\subseteq M$. Write $e=\ ^t\!(e_1, \ldots , e_n) $
 as a column vector. Denote by $\varphi = \ ^t\!(\varphi_1, \ldots ,
  \varphi_n)$ the column vector of $(1,0)$-forms in $M'$ which is the
coframe dual to $e$. For the Hermitian connection $\nabla^h$ of $g$,
let us denote by $\theta$,  $\Theta$ the matrices of connection and
curvature, respectively, and by $\tau$ the column vector of the
torsion $2$-forms, all under the local frame $e$. Then the structure
equations are
\begin{eqnarray}
d \varphi & = & - \ ^t\!\theta \wedge \varphi + \tau,  \label{formula 1}\\
d  \theta & = & \theta \wedge \theta + \Theta.
\end{eqnarray}
Taking exterior differentiation of the above equations, we get the
two Bianchi identities:
\begin{eqnarray}
d \tau & = & - \ ^t\!\theta \wedge \tau + \ ^t\!\Theta \wedge \varphi, \label{formula 3} \\
d  \Theta & = & \theta \wedge \Theta - \Theta \wedge \theta.
\end{eqnarray}
Note that under a frame change $\tilde{e}=Pe$, the corresponding
forms are changed by \[ \tilde{\varphi }= \ ^t\!P^{-1} \varphi , \ \
\ \  \tilde{\theta } = P\theta P^{-1} + dPP^{-1}, \ \ \ \
\tilde{\Theta }= P\Theta P^{-1}, \ \ \ \ \tilde{\tau } = \
^t\!P^{-1} \tau. \] In particular, the types of the $2$-forms in
$\Theta $ and $\tau$ are independent of the choice of the frame $e$.

\vsv

We will denote by $\langle , \rangle $ the (real) inner product
given by the Hermitian metric $g$, and by an abuse of notation, we
will again denote by $g$ the matrix $(\langle e_i,
\overline{e}_j\rangle)$ of the metric under the frame  $e$. The
compatibility of $\nabla^h$ with the metric implies \[ \theta g +
g\theta^{\ast } =dg, \ \  \ \ \Theta g + g\Theta^{\ast } =0, \]
where $\theta^{\ast } = \ ^t\!\overline{\theta}$. So when $e$ is a
unitary frame, $g=I$, and both $\theta $ and $\Theta $ are
skew-Hermitian. While when $e$ is holomorphic, $\theta$ is of type
$(1,0)$, thus $\tau $ must be of type $(2,0)$, and $\Theta$ cannot
have $(0,2)$ components, and its skew-Hermitian property for unitary
frames implies that it cannot have $(2,0)$ components, either. So
$\Theta$ must be of type $(1,1)$.

\vsv

In particular, when $e$ is holomorphic,  we have \[ \theta =
\partial g g^{-1}, \ \  \ \ \Theta =\overline{\partial }(\partial g
g^{-1}). \] We will write $ \omega = \sqrt{\!-\!1}\ ^t\!\varphi
\wedge g\overline{\varphi}$ and introduce the following
\begin{equation}
 \sigma = \ ^t\!\tau \wedge g \overline{\tau }.
\end{equation}
Both $\omega$ and $\sigma $ are independent of the choice of
the local frame, thus they are globally defined on $M$. $\omega $
 is the K\"ahler (aka fundamental or Hermitian or metric) form
of the Hermitian metric. It is everywhere positive definite. We will
call $\sigma$ the {\em torsion $(2,2)$-form}. It is a global,
nonnegative $(2,2)$ form on $M$, and $g$ is K\"ahler if and only if
$\sigma =0$ everywhere.

\vsss

Next, let us consider the Riemannian (aka Levi-Civita) connection
$\nabla $ of $g$. We will use $e$ and $\overline{e}$ as the frame
on the complexified tangent bundle $TM\otimes {\mathbb C}=T^{1,0}M
\oplus \overline{T^{1,0}M}$, so $ \varphi$ and $\overline{\varphi }$
 form the coframe. Write
\[ \nabla e = \theta_1 e + \overline{\theta_2 }\overline{e} , \ \ \ \
\ \nabla \overline{e} = \theta_2 e + \overline{\theta_1
}\overline{e}. \] Then the matrices of connection and curvature for
$\nabla $ becomes:
$$ \hat{\theta } = \left[ \begin{array}{ll} \theta_1 & \overline{\theta_2 } \\
\theta_2 & \overline{\theta_1 }  \end{array} \right] , \ \  \ \ \
\hat{\Theta } = \left[ \begin{array}{ll} \Theta_1 & \overline{\Theta}_2  \\
\Theta_2 & \overline{\Theta}_1   \end{array} \right], $$
 where
\begin{eqnarray}
\Theta_1 & = & d\theta_1 -\theta_1 \wedge \theta_1 -\overline{\theta_2} \wedge \theta_2, \\
\Theta_2 & = & d\theta_2 - \theta_2 \wedge \theta_1 - \overline{\theta_1 } \wedge \theta_2,  \label{formula 7}\\
d\varphi & = & - \ ^t\! \theta_1 \wedge \varphi - \ ^t\! \theta_2
\wedge \overline{\varphi }.
\end{eqnarray}
and under the frame change $\tilde{e}=Pe$, $ \overline{\tilde{e}} =
\overline{P}\overline{e}$, the above matrices of forms are changed
by \[ \tilde{\theta}_1  = P\theta_1P^{-1} +dPP^{-1}, \ \ \ \
\tilde{\theta}_2  = \overline{P} \theta_2 P^{-1}, \ \ \ \
\tilde{\Theta}_1 = P\Theta_1P^{-1}, \ \ \ \ \tilde{\Theta}_2 =
\overline{P}\Theta_2P^{-1}. \] We will write $\gamma = \theta_1 -
\theta $. Then $\tilde{\gamma } = P\gamma P^{-1}$ so $\gamma$
represents a tensor. The compatibility of $\nabla$ with the metric
implies
\begin{eqnarray*}
& & \theta_1 g + g\theta_1^{\ast } =dg, \ \ \ \ \
\theta_2 g + \overline{g} \ ^t\!\theta_2 =0, \\
& & \Theta_1 g + g\Theta_1^{\ast } =0, \ \ \ \ \Theta_2 g +
\overline{g} \ ^t\!\Theta_2 =0,
\end{eqnarray*}
where $g=(\langle e_i, \overline{e}_j\rangle )$  and $\alpha^{\ast
}=\ ^t\!\overline{\alpha }$ as before. So when $e$ is unitary, both
$\theta_2 $ and $\Theta_2$ are skew-symmetric, while $\theta_1$,
$\gamma$, and $\Theta_1$ are skew-Hermitian.

\vsv

Let us denote by $\gamma = \gamma ' + \gamma ''$ the decomposition
into $(1,0)$ and $(0,1)$ parts. Note that the following two
$2$-forms are independent of the choice of the frame $e$, thus are
globally defined on $M$:
\begin{equation}
\sigma_1 = - \sqrt{\!-\!1}\ \mbox{tr} (\gamma '\wedge \gamma'' ), \
\ \ \ \sigma_2 = \sqrt{\!-\!1} \ \mbox{tr} (\overline{\theta_2
}\wedge \theta_2 ). \ \ \
\end{equation}
We will see that both are nonnegative $(1,1)$ forms,
and vanish identically when and only when the metric is K\"ahler.


\begin{lemma}  \label{lemma 1}


{\em Each entry of $\theta_2$ is a $(1,0)$ form, and the $(0,2)$
component of $\Theta_2$ is zero.}
\end{lemma}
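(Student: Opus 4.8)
The plan is to handle the two assertions separately, deriving the first from the integrability of $J$ together with the Hermitian property of $g$, and then obtaining the second as a formal consequence of the structure equation.

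For the first assertion, I would begin by unwinding the definition $\nabla e = \theta_1 e + \overline{\theta_2}\,\overline{e}$: the entry $(\overline{\theta_2})_{jk}$ is the $\overline{e}_k$-component of $\nabla e_j$, so the entries of $\theta_2$ are $(1,0)$-forms if and only if $\nabla_X Y$ lies in $T^{1,0}M$ for all local sections $X,Y$ of $T^{1,0}M$. Since $\langle\,,\,\rangle$ pairs $T^{1,0}M$ only with $T^{0,1}M$ (equivalently, $g$ is of type $(1,1)$, so $\langle e_i,e_j\rangle\equiv 0$), this reduces to showing $S_{ijk}:=\langle\nabla_{e_i}e_j,e_k\rangle=0$ for all $i,j,k$. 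I would then extract two symmetries of $S$. Differentiating the identity $\langle e_j,e_k\rangle\equiv 0$ and using compatibility of $\nabla$ with $g$ gives $S_{ijk}=-S_{ikj}$. On the other hand, since $\nabla$ is torsion-free, $\nabla_{e_i}e_j-\nabla_{e_j}e_i=[e_i,e_j]$, and integrability of the complex structure forces $[e_i,e_j]\in T^{1,0}M$, which is totally isotropic for $\langle\,,\,\rangle$; hence $\langle[e_i,e_j],e_k\rangle=0$ and $S_{ijk}=S_{jik}$. A tensor symmetric in its first two slots and skew in its last two must vanish, as one sees by chasing the string $S_{ijk}=S_{jik}=-S_{jki}=-S_{kji}=S_{kij}=S_{ikj}=-S_{ijk}$, so $S\equiv 0$. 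This argument uses no special frame; if desired, frame-independence of the conclusion also follows from the transformation rule $\tilde\theta_2=\overline{P}\theta_2 P^{-1}$.

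For the second assertion, I would feed the first into the structure equation $\Theta_2=d\theta_2-\theta_2\wedge\theta_1-\overline{\theta_1}\wedge\theta_2$. Once the entries of $\theta_2$ are known to be of type $(1,0)$, $d\theta_2=\partial\theta_2+\overline{\partial}\theta_2$ has components only of types $(2,0)$ and $(1,1)$; and because $\theta_1$ and $\overline{\theta_1}$ are matrices of $1$-forms, both $\theta_2\wedge\theta_1$ and $\overline{\theta_1}\wedge\theta_2$ likewise have components only of types $(2,0)$ and $(1,1)$. Therefore $\Theta_2$ has no $(0,2)$-component. I expect the only genuine content to lie in the first part --- specifically in lining up the two symmetries of $S_{ijk}$ correctly and in recognizing that integrability of $J$ is exactly what annihilates the bracket term; once $\theta_2$ is of type $(1,0)$, the statement about $\Theta_2$ is pure bookkeeping with bidegrees.
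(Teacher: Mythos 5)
Your proof is correct, but it reaches the first (and only substantive) assertion by a genuinely different route than the paper. The paper stays inside the moving-frame formalism: it equates the two expressions for $d\varphi$ coming from the Chern and Levi-Civita connections to obtain ${}^t\gamma\wedge\varphi+\tau+{}^t\theta_2\wedge\overline{\varphi}=0$, reads off the $(0,2)$ component ${}^t\theta_2''\wedge\overline{\varphi}=0$ (using that the Chern torsion $\tau$ is of type $(2,0)$), and then kills $\theta_2''$ by combining this wedge identity with the skew-symmetry of $\theta_2''$ in a unitary frame. You instead work directly with the covariant derivative: you show $\langle\nabla_{e_i}e_j,e_k\rangle=0$ by playing the skew-symmetry in the last two slots (metric compatibility plus isotropy of $T^{1,0}M$) against the symmetry in the first two (torsion-freeness plus integrability of $J$), via the standard six-step index chase. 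The two arguments secretly rest on the same three inputs --- compatibility, vanishing torsion, integrability --- since the skew-symmetry of $\theta_2$ in a unitary frame is metric compatibility in disguise and the absence of a $(0,2)$ part in $d\varphi$ is integrability; but your version is frame-independent and self-contained and never mentions the Chern connection, whereas the paper's version has the side benefit of simultaneously producing the other two type components of the same identity, which it needs immediately afterwards to compute the coefficients of $\theta_2$ and $\gamma$ in Lemma 2. Your treatment of the second assertion, reading bidegrees off $\Theta_2=d\theta_2-\theta_2\wedge\theta_1-\overline{\theta_1}\wedge\theta_2$ once $\theta_2$ is known to be of type $(1,0)$, coincides with the paper's.
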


\begin{proof} Let $e$ be a local unitary frame. Write $\tau_i =
\sum_{j,k=1}^n T^i_{jk} \varphi_j\wedge \varphi_k$, where
$T^i_{jk}=-T^i_{kj}$. By (1) and (8), we get \[ ^t\! \gamma \wedge
\varphi + \tau + \ ^t\!\theta_2 \wedge \overline{\varphi } =0.\] Let
$\theta_2 =\theta_2 ' +\theta_2''$ be the decomposition into type
$(1,0)$ and $(0,1)$, respectively. The above equation gives
\begin{equation}
^t\!\theta_2'' \wedge \overline{\varphi } =0, \ \ \ ^t\! \gamma''
\wedge \varphi + ^t\!\theta_2' \wedge \overline{\varphi } =0, \ \ \
^t\! \gamma' \wedge \varphi + \tau =0.  \label{formula 10}
\end{equation}
Since $e$ is unitary, both $\theta_2 '$ and $\theta_2''$ are
skew-symmetric, and $\gamma'' = - \gamma'^{\ast }$. The first
equation in (\ref{formula 10}) implies that $\theta_2 ''=0$. Now by
(\ref{formula 7}), the $(0,2)$ part of $\Theta_2$ vanishes.

\end{proof}


\begin{lemma} \label{lemma 2}


Write $\tau_i = \sum_{j,k=1}^n T^i_{jk} \varphi_j\wedge \varphi_k$
with $T^i_{jk}=-T^i_{kj}$ under the frame $e$ and its dual coframe
$\varphi$. If $e$ is unitary, then
\begin{equation}
(\theta_2)_{ij} = \sum_{k=1}^n \overline{T^k_{ij}}\varphi_k, \ \ \
\gamma_{ij} = \sum_{k=1}^n (T^j_{ik}\varphi_k -
\overline{T^i_{jk}}\overline{\varphi}_k ).  \label{formula 11}
\end{equation}
\end{lemma}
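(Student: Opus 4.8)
The plan is to start from the two structure equations \eqref{formula 1} and (8), which relate $d\varphi$ to the Chern connection data $(\theta,\tau)$ and to the Levi--Civita data $(\theta_1,\theta_2)$. Subtracting them yields the single relation $^t\!\gamma\wedge\varphi+\tau+\ ^t\!\theta_2\wedge\overline\varphi=0$ already derived in the proof of Lemma \ref{lemma 1}, and the bidegree decomposition \eqref{formula 10} of that identity is the backbone of the argument. Working throughout in a unitary frame $e$, so that $g=I$, $\theta_2'$ is skew-symmetric, and $\gamma''=-\gamma'^{\ast}$, I would extract the components of each of the three equations in \eqref{formula 10} against the basis $\varphi_k$, $\overline\varphi_k$ of $1$-forms.

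\textbf{Key steps.} First, write $\gamma'=\sum_k a^{(k)}\varphi_k$ and $\theta_2=\theta_2'=\sum_k b^{(k)}\varphi_k$ for matrix-valued coefficients $a^{(k)},b^{(k)}$ (using $\theta_2''=0$ from Lemma \ref{lemma 1}). The equation $^t\!\gamma'\wedge\varphi+\tau=0$ reads, in components, $(\gamma')_{ji}\wedge\varphi_j$-type terms matching $\tau_i$; comparing coefficients of $\varphi_j\wedge\varphi_k$ and using $T^i_{jk}=-T^i_{kj}$ gives the $(1,0)$-part of $\gamma_{ij}$, namely $\sum_k T^j_{ik}\varphi_k$ — this is the first half of the claimed formula for $\gamma_{ij}$. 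Second, feed this into the middle equation $^t\!\gamma''\wedge\varphi+\ ^t\!\theta_2'\wedge\overline\varphi=0$: the first term is now known (it is the conjugate-transpose data of what we just computed, wedged with $\varphi$), and matching coefficients of $\varphi_j\wedge\overline\varphi_k$ isolates the entries of $\theta_2'$, giving $(\theta_2)_{ij}=\sum_k\overline{T^k_{ij}}\,\varphi_k$. Finally, since $\gamma''=-\gamma'^{\ast}$, the $(0,1)$-part of $\gamma_{ij}$ is $-\overline{(\gamma')_{ji}}=-\sum_k\overline{T^i_{jk}}\,\overline\varphi_k$, completing the formula for $\gamma_{ij}$. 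One should double-check consistency: the skew-symmetry of $\theta_2'$ must be compatible with $T^k_{ij}=-T^k_{ji}$, which indeed holds, and the skew-Hermitian property of $\gamma$ is automatic from $\gamma''=-\gamma'^{\ast}$.

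\textbf{Main obstacle.} The only real subtlety is bookkeeping: the wedge relations mix the free index $i$ (labeling which torsion form $\tau_i$, i.e.\ which row of $\theta_2$ or $\gamma$) with the summation indices, and one must be careful that ``matching the coefficient of $\varphi_j\wedge\varphi_k$'' in an equation like $\sum_j (\gamma')_{ij}\wedge\varphi_j=-\tau_i$ correctly accounts for the antisymmetrization, so that no spurious factor of $2$ or sign creeps in. Once the index conventions are pinned down, each of the three equations in \eqref{formula 10} determines exactly the piece of $\theta_2$ or $\gamma$ asserted, and there is nothing deeper to do. I would present it as a short computation: decompose, match coefficients in each bidegree, and read off \eqref{formula 11}.
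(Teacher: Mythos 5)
Your route is the same as the paper's: both extract the coefficients of $\theta_2$ and $\gamma'$ from the last two equations of \eqref{formula 10} in a unitary frame, using $\gamma''=-\,^t\overline{\gamma'}$ and the skew-symmetry of $\theta_2$, so the approach is fine and the formulas you land on are correct. The one step that does not work in the order you present it is the very first one: the equation $\ ^t\!\gamma'\wedge\varphi+\tau=0$ alone does \emph{not} determine $\gamma'$. Writing $(\gamma')_{ji}=\sum_k c^{ji}_k\varphi_k$, matching coefficients of $\varphi_j\wedge\varphi_k$ only forces the antisymmetrization $c^{ji}_k-c^{ki}_j=2T^i_{jk}$; any $c^{ji}_k=T^i_{jk}+s^{ji}_k$ with $s^{ji}_k=s^{ki}_j$ also satisfies it, so you cannot yet conclude $(\gamma')_{ij}=\sum_k T^j_{ik}\varphi_k$ and then feed a known $\gamma'$ into the middle equation. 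The three relations have to be solved as a coupled system: the middle equation together with $\gamma''=-\gamma'^{\ast}$ gives $c^{ia}_b=-\overline{(\theta_2)^{bi}_a}$, the skew-symmetry of $\theta_2$ in a unitary frame then symmetrizes this, and substituting back into the antisymmetrized relation yields $(\theta_2)_{ij}=\sum_k\overline{T^k_{ij}}\varphi_k$ and $c^{ji}_k=T^i_{jk}$ simultaneously. (Alternatively, simply check that the stated formulas satisfy all of \eqref{formula 10} together with the symmetry constraints, and appeal to the uniqueness of the Levi--Civita connection.) With that reordering your computation closes up and agrees with the paper's proof.
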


\begin{proof} Under a unitary frame, $\gamma '' = - \
^t\!\overline{\gamma'}$. So by the last two equations in
(\ref{formula 10}) we get the coefficients of $\theta_2$ and
$\gamma'$ under the frame.
\end{proof}

\begin{lemma}  \label{lemma 3}

$\sigma_1$ and $\sigma_2$ are globally defined, nonnegative $(1,1)$
forms on $M$. The metric $g$ is K\"ahler
 if and only if any one of the following vanishes identically: $\tau$,
$\theta_2$, $\gamma '$, $\sigma$, $\sigma_1$, $\sigma_2$. Also,
$d\sigma_2 = \sqrt{\!-\!1} \mbox{tr} (\overline{\Theta}_2\theta_2 -
\overline{\theta_2}\Theta_2)$.
\end{lemma}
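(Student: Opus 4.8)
The plan is to unpack the three assertions in turn, using the formulas of Lemmas \ref{lemma 1} and \ref{lemma 2} together with the structure equations. First, for the claim that $\sigma_1$ and $\sigma_2$ are globally defined $(1,1)$ forms: global definedness follows immediately from the frame-change rules already recorded, namely $\tilde\gamma = P\gamma P^{-1}$ and $\tilde\theta_2 = \overline P\,\theta_2 P^{-1}$, since the traces $\operatorname{tr}(\gamma'\wedge\gamma'')$ and $\operatorname{tr}(\overline{\theta_2}\wedge\theta_2)$ are manifestly invariant under conjugation. That $\sigma_2$ has type $(1,1)$ is clear since every entry of $\theta_2$ is a $(1,0)$ form by Lemma \ref{lemma 1}, so $\overline{\theta_2}\wedge\theta_2$ is $(1,1)$; and $\sigma_1$ is $(1,1)$ because $\gamma' \wedge \gamma''$ pairs a $(1,0)$ form with a $(0,1)$ form entrywise. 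For nonnegativity I would work in a unitary frame and use the explicit coefficients of Lemma \ref{lemma 2}: writing $\gamma'_{ij} = \sum_k T^j_{ik}\varphi_k$ and $\theta_2)_{ij} = \sum_k \overline{T^k_{ij}}\varphi_k$, one computes $\operatorname{tr}(\gamma'\wedge\gamma'') = \sum_{i,j} \gamma'_{ij}\wedge\gamma''_{ji}$ and $\operatorname{tr}(\overline{\theta_2}\wedge\theta_2) = \sum_{i,j}\overline{(\theta_2)_{ij}}\wedge(\theta_2)_{ij}$, and in each case the coefficient of $\sqrt{-1}\,\varphi_k\wedge\overline{\varphi}_l$ assembles into an expression of the form $\sum (\text{entries of }T)\cdot\overline{(\text{entries of }T)}$, i.e. a Hermitian matrix that is visibly positive semidefinite (a Gram matrix in the $T^i_{jk}$). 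So both $\sigma_1$ and $\sigma_2$ are nonnegative.

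Next, the equivalence of the six vanishing conditions with the Kähler condition. The cleanest route is to show each of $\theta_2 = 0$, $\gamma' = 0$, $\sigma = 0$, $\sigma_1 = 0$, $\sigma_2 = 0$ is equivalent to $\tau = 0$, which is in turn the Kähler condition (already established as the vanishing of $\sigma$ in the text, or directly: $\tau = 0$ means $\nabla^h$ is torsion-free, hence equals $\nabla$, hence Kähler). From Lemma \ref{lemma 2}, in a unitary frame $\theta_2$, $\gamma'$, and $\tau$ all have coefficients that are $\pm$ the numbers $T^i_{jk}$ (or their conjugates), so $\tau = 0 \iff \theta_2 = 0 \iff \gamma' = 0$ is immediate. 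For $\sigma = \,^t\!\tau\wedge g\overline\tau$: in a unitary frame $\sigma = \sum_i \tau_i \wedge \overline{\tau_i}$, whose top coefficient is again a Gram-type sum $\sum |T^i_{jk}|^2$ (up to combinatorial factors), so $\sigma = 0 \iff \tau = 0$; this reproves the assertion from Section 2. The same computation applied to the explicit forms of $\sigma_1$ and $\sigma_2$ from the previous paragraph shows each vanishes iff all $T^i_{jk} = 0$, i.e. iff $\tau = 0$. I would present this as one unified "trace of a nonnegative Hermitian form" computation rather than five separate ones.

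Finally, the formula $d\sigma_2 = \sqrt{-1}\operatorname{tr}(\overline\Theta_2\theta_2 - \overline{\theta_2}\Theta_2)$. Here I would differentiate $\sigma_2 = \sqrt{-1}\operatorname{tr}(\overline{\theta_2}\wedge\theta_2)$ directly: $d\sigma_2 = \sqrt{-1}\operatorname{tr}(d\overline{\theta_2}\wedge\theta_2 - \overline{\theta_2}\wedge d\theta_2)$, then substitute the structure equation \eqref{formula 7}, $d\theta_2 = \Theta_2 + \theta_2\wedge\theta_1 + \overline{\theta_1}\wedge\theta_2$, and its conjugate $d\overline{\theta_2} = \overline\Theta_2 + \overline{\theta_2}\wedge\overline{\theta_1} + \theta_1 \wedge\overline{\theta_2}$. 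Expanding and using cyclicity of trace, the terms involving $\theta_1$ and $\overline{\theta_1}$ should cancel in pairs (this is the standard phenomenon that makes $d\operatorname{tr}(\cdots)$ expressible in curvature alone), leaving exactly $\sqrt{-1}\operatorname{tr}(\overline\Theta_2\wedge\theta_2 - \overline{\theta_2}\wedge\Theta_2)$. The main obstacle — really the only place care is needed — is this last cancellation: one must track signs produced by moving $1$-forms past $2$-forms and by transposition inside the trace, and use that $\theta_2$ is skew-symmetric while $\theta_1$ is not, so the bookkeeping is not entirely symmetric. Everything else is routine linear algebra with the coefficient formulas of Lemma \ref{lemma 2}.
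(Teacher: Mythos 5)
Your proposal is correct and follows essentially the same route as the paper: global definedness via the frame-change rules, nonnegativity and the six vanishing equivalences via the explicit Gram-type coefficient formulas of Lemma \ref{lemma 2} in a unitary frame, and the $d\sigma_2$ identity by differentiating $\mathrm{tr}(\overline{\theta_2}\wedge\theta_2)$ and substituting the structure equation (\ref{formula 7}). The only difference is that you spell out the cancellation of the $\theta_1$-terms (which the paper leaves as ``a direct consequence''), and that cancellation indeed works --- in fact by cyclicity of the trace alone, without needing the skew-symmetry of $\theta_2$.
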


\begin{proof}
Under a frame change $\tilde{e}=Pe$, the matrices
$\overline{\theta}_2 \wedge \theta_2$ and $-\gamma'\wedge\gamma''$
are changed into $P\overline{\theta_2 }\wedge \theta_2P^{-1}$ and
$-P \gamma'\wedge\gamma''P^{-1}$, respectively, so their traces,
$\sigma_2$ and $\sigma_1$, are globally defined $(1,1)$ forms on
$M$. By (\ref{formula 11}), locally under any unitary frame $e$,
they can be expressed as
\begin{eqnarray}
\sigma_2 & = & \sqrt{\!-\!1}\sum_{k,l=1}^n (\sum_{i,j=1}^n
T^l_{ij}\overline{T^k_{ij}}) \varphi_k\wedge \overline{\varphi}_l, \\
\sigma_1 & = & \sqrt{\!-\!1} \sum_{k,l=1}^n (\sum_{i,j=1}^n T^j_{ik}
\overline{T^j_{il}}) \varphi_k\wedge \overline{\varphi}_l.
\end{eqnarray}
Therefore both are everywhere nonnegative, and the vanishing of
either of them is equivalent to the vanishing of $\tau$.  The
identity on $d\sigma_2$ is a direct consequence of (\ref{formula
7}).
\end{proof}

Next, let us recall the {\em torsion $1$-form} $\eta$ which is
defined to be the trace of $\gamma'$ (\cite{Gauduchon}). Under any
frame $e$, it has the expression:
\begin{equation}
\eta = \mbox{tr}(\gamma') = \sum_{i,j=1}^n T^i_{ij}\varphi_j.
\end{equation}
A direct computation shows that
\begin{equation}
\partial \omega^{n-1} = -2 \eta \wedge \omega^{n-1}.
\label{formula 15}
\end{equation}
Recall that the metric $g$ is said to be {\em balanced} if
$\omega^{n-1}$ is closed. The above identity shows that $g$ is balanced
if and only if $\eta =0$. When $n=2$, $\eta =0$ means $\tau =0$, so
balanced complex surfaces are K\"ahler. But for $n\geq 3$, $\eta $
contains less information than $\tau$.

\vsv

Let us conclude this section by pointing out the following fact,
which is probably well-known to experts in the field, but we give
the outline of proof here for readers' convenience.

\begin{lemma}  \label{lemma 4}
Given any point $p$ in a Hermitian manifold $(M^n,g)$, there exists
a unitary frame $e$ in a neighborhood of $p$ such that
$\theta|_p=0$.
\end{lemma}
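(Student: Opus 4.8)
The plan is to fix the Chern connection matrix pointwise by a gauge transformation, starting from an arbitrary unitary frame rather than a holomorphic one (a holomorphic change of coordinates alone is not enough, since in general one cannot kill the first $(1,0)$-derivatives of the metric at $p$). First I would choose any unitary frame $e^0$ of $T^{1,0}M$ on a neighborhood $V$ of $p$ --- for instance by applying Gram--Schmidt to a local holomorphic frame, or by setting $e^0 = g^{-1/2}s$ for a holomorphic frame $s$ --- and let $\theta^0$ be the connection matrix of $\nabla^h$ in this frame. As recorded in Section 2, compatibility of $\nabla^h$ with $g$ together with $g\equiv I$ in a unitary frame forces $\theta^0+(\theta^0)^{\ast}=0$; that is, $\theta^0$ is a $1$-form with values in the skew-Hermitian matrices $\mathfrak{u}(n)$.

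Next I would look for the desired frame in the form $e=Pe^0$, where $P\colon V\to U(n)$ is smooth; any such $e$ is automatically unitary since $PP^{\ast}=I$ pointwise. By the frame-change formula of Section 2, the connection matrix of $\nabla^h$ in the frame $e$ is $\theta=P\theta^0P^{-1}+dP\,P^{-1}$, so if we impose $P(p)=I$ then $\theta|_p=\theta^0|_p+dP|_p$. Hence it suffices to produce a smooth $P\colon V\to U(n)$ with $P(p)=I$ and $dP|_p=-\theta^0|_p$.

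Such a $P$ exists precisely because $-\theta^0|_p$ is a covector with values in $\mathfrak{u}(n)=T_I U(n)$: in local real coordinates $(x^1,\dots,x^{2n})$ centered at $p$, write $-\theta^0|_p=\sum_{\mu} A_{\mu}\,dx^{\mu}$ with $A_{\mu}\in\mathfrak{u}(n)$, and set $P(x)=\exp\!\big(\sum_{\mu} x^{\mu}A_{\mu}\big)$. Then $P$ takes values in $U(n)$, $P(p)=I$, and $dP|_p=\sum_{\mu}A_{\mu}\,dx^{\mu}=-\theta^0|_p$, so the frame $e=Pe^0$ satisfies $\theta|_p=0$.

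There is no real obstacle here; the whole argument rests on the two elementary facts that in a unitary frame the Chern connection form is $\mathfrak{u}(n)$-valued and that a $\mathfrak{u}(n)$-valued $1$-form can be matched to first order at a point by a $U(n)$-valued function via the exponential map. If one prefers to start from a holomorphic frame $s$ normalized so that $g(p)=I$, the same scheme works with $\theta^0|_p=\partial g|_p$ and $P=U\,g^{-1/2}$, where $U\colon V\to U(n)$ is chosen with $U(p)=I$ and $dU|_p=-\tfrac12(\partial g-\overline{\partial} g)|_p$; since $g=g^{\ast}$ one has $(\partial g)^{\ast}=\overline{\partial} g$, so this right-hand side is again $\mathfrak{u}(n)$-valued, and a short computation using $d(g^{-1/2})|_p=-\tfrac12\,dg|_p$ yields $\theta|_p=0$.
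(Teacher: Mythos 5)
Your proof is correct and follows essentially the same route as the paper: both arguments reduce the lemma to producing a smooth $U(n)$-valued map $P$ with $P(p)=I$ and $dP|_p=-\theta|_p$, which is possible precisely because the Chern connection form in a unitary frame is $\mathfrak{u}(n)$-valued. The only (cosmetic) difference is that you realize $P$ as a single exponential $\exp\bigl(\sum_\mu x^\mu A_\mu\bigr)$, whereas the paper builds it from products of one-parameter subgroups of $U(n)$, one pair per complex coordinate.
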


\begin{proof} First we establish the following claim: Given any
$n\times n$ complex matrix $X$, there exists a $C^{\infty }$ map $f$
from a small
 disc $D$ in ${\mathbb C}$ into the unitary group $U(n)$ such that
 $f(0)=I$ and $\frac{\partial f}{\partial z}|_0=X$.

To prove the claim, let $P=X-X^{\ast }$, $Q=i(X+X^{\ast })$. Both are
skew-Hermitian, thus in the Lie algebra of $U(n)$. So there are
$1$-parameter subgroups $\phi$ and $\psi$ in $U(n)$ such that
$\phi'(0)=P$ and $\psi'(0)=Q$. Now let $f(z)=f(x+iy)=\phi(x)\psi(y)$.
We have $f(0)=I$, and
$\frac{\partial f}{\partial z}|_0 =
\frac{1}{2} (\frac{\partial f}{\partial x}-
i\frac{\partial f}{\partial y})|_0 =
\frac{1}{2} (\phi'(0)-i\psi'(0))=\frac{1}{2} (P-iQ)=X$.

Now by taking matrix products, we know there exists a smooth map $A$
from a small neighborhood of $p$ in $M^n$ into $U(n)$, such that
$\frac{\partial A}{\partial z_i}|_p=X_i$, $1\leq i\leq n$, for any
prescribed complex $n\times n$ matrices $X_1, \ldots , X_n$.

Take any unitary local frame $e$ near $p$. Write
$\theta|_p=\sum_{i=1}^n (-X_idz_i +X^{\ast}_i d\overline{z}_i)$.
Then $\tilde{e}=Ae$ will satisfy $ \tilde{\theta}|_p =(A\theta
A^{-1}+dAA^{-1})|_p=\theta|_p+dA|_p = 0$.

\end{proof}


\section{The Riemannian and Hermitian curvature tensors}

Now we turn our attention to the curvature tensors. Denote by
$R^h$, $R$ the curvature tensor of the Hermitian connection
$\nabla^h$ or the  Riemannian connection $\nabla$, respectively.
We have
\begin{eqnarray}
\Theta_{ij} & = & \sum_{k,l=1}^{n} R^h_{k\overline{l}i\overline{j}} \ \varphi_k \wedge \overline{\varphi_l}, \\
(\Theta_2)_{ij} & = & \sum_{k,l=1}^n (\frac{1}{2} R_{kl\overline{i}\overline{j}} \ \varphi_k \wedge \varphi_l + R_{k\overline{l}\overline{i}\overline{j}}  \ \varphi_k \wedge \overline{\varphi_l} ), \\
(\Theta_1)_{ij} & = & \sum_{k,l=1}^n ( \frac{1}{2}
R_{kli\overline{j}} \ \varphi_k\wedge \varphi_l +
R_{k\overline{l}i\overline{j}} \ \varphi_k \wedge
\overline{\varphi_l} +   \frac{1}{2}
R_{\overline{k}\overline{l}i\overline{j}} \ \overline{\varphi_k}
\wedge \overline{\varphi_l}  ).
\end{eqnarray}
Note that we have
\begin{equation}
R_{\overline{i}\overline{j}\overline{k}\overline{l}} = R_{ijkl} =0,
\end{equation}
because $\Theta_2^{0,2}=0$ by Lemma \ref{lemma 1}.
This property for general Hermitian metric was discovered by Gray in
\cite{Gray} (Theorem 3.1 on page 603), where it was stated as an
equation with $8$ real terms. (This perhaps once again illustrates
the usefulness of writing things in complex coordinates instead of
regarding $M$ as a real manifold with an integrable almost complex
structure $J$.)

\vsv

From (16), (17), (18), and the definition of K\"ahler-like and
G-K\"ahler-like in Section 1, it is easy to see that the following
hold:

\begin{lemma}  \label{lemma 5}
Given a Hermitian manifold $(M^n,g)$, $g$ is K\"ahler-like if and
only if $\ ^t\!\Theta \wedge \varphi=0$, and $g$ is G-K\"ahler-like
if and only if $\Theta_2=0$.
\end{lemma}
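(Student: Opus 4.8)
The plan is to prove both equivalences directly from the curvature formulas (16)--(19) together with the two Bianchi identities. For the K\"ahler-like case, recall that (16) says $\Theta_{ij} = \sum_{k,l} R^h_{k\overline{l}i\overline{j}}\,\varphi_k\wedge\overline{\varphi_l}$, so $^t\!\Theta\wedge\varphi$ has components $\sum_{k,l,i} R^h_{k\overline{l}i\overline{j}}\,\varphi_k\wedge\overline{\varphi_l}\wedge\varphi_i$. After reordering the wedge factors into the standard order $\varphi_k\wedge\varphi_i\wedge\overline{\varphi_l}$ and antisymmetrizing in $k\leftrightarrow i$, the vanishing of the $(2,1)$-form $(^t\!\Theta\wedge\varphi)_j$ is equivalent to $R^h_{k\overline{l}i\overline{j}} = R^h_{i\overline{l}k\overline{j}}$ for all indices, which is precisely the K\"ahler-like symmetry $R^h_{X\overline{Y}Z\overline{W}} = R^h_{Z\overline{Y}X\overline{W}}$. (One should note that $\Theta$ has no $(2,0)$ or $(0,2)$ part, as established in \S 2, so no other components of $^t\!\Theta\wedge\varphi$ arise.)

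For the G-K\"ahler-like case, the cleanest route is to read off from (17) that $\Theta_2 = 0$ forces both $R_{kl\overline{i}\overline{j}} = 0$ and $R_{k\overline{l}\overline{i}\overline{j}} = 0$ for all indices; conjugating the second gives $R_{\overline{k}li\overline{j}} = 0$, i.e. $R_{XYZ\overline{W}} = 0$ after relabeling, while the first is $R_{XY\overline{Z}\overline{W}} = 0$. Combined with Gray's identity (19), $R_{XYZW} = 0$, this is exactly the statement that $g$ is G-K\"ahler-like. Conversely, if $g$ is G-K\"ahler-like then those same components vanish and (17) shows every entry of $\Theta_2$ is zero. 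So this direction is essentially a matter of matching up index positions carefully using the conventions in (16)--(18).

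I expect the main obstacle to be purely bookkeeping: getting the index placements and the signs right when reordering wedge products of the form $\varphi_k\wedge\overline{\varphi_l}\wedge\varphi_i$, and making sure the antisymmetrization in the first pair of indices of $R^h$ (which is built into writing $\Theta_{ij}$ as a sum over $k<l$ versus all $k,l$) is handled consistently with the definition of K\"ahler-like. There is a subtlety worth flagging: a priori $^t\!\Theta\wedge\varphi = 0$ only gives the symmetrized condition on the $(k,i)$-antisymmetric part, so one must check that the K\"ahler-like condition as stated is genuinely equivalent and not merely implied; this follows because $R^h_{k\overline{l}i\overline{j}}$ is symmetric in $k\leftrightarrow i$ precisely when its antisymmetric part vanishes, and the antisymmetric part is exactly what the $(2,1)$-form records. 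Beyond that, the proof is a short direct verification and no deeper input (e.g. the Bianchi identities (3)--(4)) is actually needed for this lemma, though (19) is invoked implicitly to see that the G-K\"ahler-like condition only constrains $\Theta_2$ and not $\Theta_2$'s conjugate separately.
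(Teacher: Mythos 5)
Your proposal is correct and takes essentially the same route as the paper, which offers no written proof beyond asserting that the lemma follows from (16)--(18) and the definitions; your component-by-component verification (reordering $\varphi_k\wedge\overline{\varphi_l}\wedge\varphi_i$ and antisymmetrizing in $k\leftrightarrow i$ for the first claim, reading off the $(2,0)$ and $(1,1)$ parts of $\Theta_2$ for the second) is exactly that check, with the type-$(1,1)$ property of $\Theta$ and Lemma \ref{lemma 1} correctly flagged as the inputs. One minor slip: the conjugate of $R_{k\overline{l}\,\overline{i}\,\overline{j}}$ is $R_{\overline{k}lij}$, not $R_{\overline{k}li\overline{j}}$, and converting this to the form $R_{XYZ\overline{W}}$ uses the pair symmetry $R_{abcd}=R_{cdab}$ together with the skew-symmetries --- worth saying explicitly, but it does not affect the argument.
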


Note that the G-K\"ahler-like condition is equivalent to
$$ R_{xyuv} = R_{xyJuJv}$$
for any real tangent vectors $x$, $y$, $u$, $v$ on $M$. So this is
just the symmetry condition introduced by Gray in \cite{Gray}
(formula (1) on page 605).

\vsv

By Lemma \ref{lemma 5} and (\ref{formula 3}), $g$ being
K\"ahler-like would mean that under any frame $e$, we have $d\tau =
- \ ^t\!\theta \wedge \tau$. By the structure equation (\ref{formula
1})-(\ref{formula 3}), we know that under any unitary frame $e$, we
have
\begin{eqnarray}
\partial \omega = \sqrt{\!-\!1} \ ^t\!\tau \wedge \overline{\varphi },
\ \ \ \sqrt{\!-\!1}\partial \overline{\partial } \omega = \ ^t\!\tau
\wedge \overline{\tau} + \ ^t\!\varphi \wedge \Theta \wedge
\overline{\varphi }.
\end{eqnarray}
In particular, when $g$ is K\"ahler-like, we have
$\sqrt{\!-\!1}\partial \overline{\partial }  \ \omega = \sigma$.
In this case, if $M^n$ is compact and admits a positive
$(n\!-\!2,n\!-\!2)$ form $\chi$ that is $\partial \overline{\partial }$-closed,
then we can integrate $\sigma \wedge \chi$ and conclude that $\sigma$ must
be $0$, that is,



\begin{theorem} \label{thorem 6}

Let $(M^n,g)$ be a Hermitian manifold that is K\"ahler-like. If
$M^n$ is compact and admits a positive, $\partial \overline{\partial
}$-closed $(n\!-\!2,n\!-\!2)$ form $\chi$, then $g$ is K\"ahler. In
particular, if $M^n$ is compact,  K\"ahler-like, and $\partial
\overline{\partial }\ \omega^{n-2} =0$, then $g$ is K\"ahler. When
$n=2$, compactness implies that any K\"ahler-like metric is
K\"ahler.
\end{theorem}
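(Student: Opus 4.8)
The plan is to exploit the identity for $\sqrt{-1}\,\partial\overline{\partial}\,\omega$ recorded just above the statement. Under any unitary frame $e$ we have
\[
\sqrt{\!-\!1}\,\partial\overline{\partial}\,\omega \;=\; {}^{t}\!\tau\wedge\overline{\tau}\;+\;{}^{t}\!\varphi\wedge\Theta\wedge\overline{\varphi},
\]
and the K\"ahler-like hypothesis means, by Lemma \ref{lemma 5}, that $\,{}^{t}\!\Theta\wedge\varphi=0$, equivalently $\,{}^{t}\!\varphi\wedge\Theta\wedge\overline{\varphi}=0$ after transposing and using the skew-symmetry of wedge on $1$-forms. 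Hence the second term drops out and we are left with $\sqrt{-1}\,\partial\overline{\partial}\,\omega={}^{t}\!\tau\wedge g\,\overline{\tau}=\sigma$, the torsion $(2,2)$-form, which is globally defined, $(2,2)$, and pointwise nonnegative by the discussion of $\sigma$ in \S2 (and it vanishes identically precisely when $g$ is K\"ahler).

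Now suppose $M^n$ is compact and carries a positive, $\partial\overline{\partial}$-closed $(n-2,n-2)$-form $\chi$. Consider the integral $\int_M \sigma\wedge\chi=\int_M \sqrt{-1}\,\partial\overline{\partial}\,\omega\wedge\chi$. Integrating by parts twice (Stokes on the compact manifold $M$, moving $\partial$ and $\overline{\partial}$ onto $\chi$, with no boundary terms) gives $\int_M \omega\wedge\sqrt{-1}\,\partial\overline{\partial}\,\chi=0$ since $\chi$ is $\partial\overline{\partial}$-closed; one must be careful with the signs produced by commuting the degree-one operators $\partial,\overline{\partial}$ past the forms, but the upshot is that the integral vanishes. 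On the other hand $\sigma\wedge\chi$ is a nonnegative top-degree form: $\sigma\geq 0$ as a $(2,2)$-form and $\chi>0$ as an $(n-2,n-2)$-form, so the pointwise product is a nonnegative multiple of the volume form. A nonnegative integrand with zero integral must vanish identically, so $\sigma\wedge\chi\equiv 0$, and positivity of $\chi$ forces $\sigma\equiv 0$ on $M$. By Lemma \ref{lemma 3}, $\sigma=0$ is equivalent to $g$ being K\"ahler. For the stated corollary, take $\chi=\omega^{n-2}$: if $\partial\overline{\partial}\,\omega^{n-2}=0$ then $\chi$ is the required form (it is positive since $\omega>0$), so $g$ is K\"ahler. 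Finally, when $n=2$ the form $\chi$ is just a positive function, trivially $\partial\overline{\partial}$-closed after multiplying by a suitable Gauduchon-type factor—more directly, for $n=2$ we have $\sqrt{-1}\,\partial\overline{\partial}\,\omega=\sigma\geq 0$ is an exact real $(2,2)=(n,n)$-form on a compact surface, hence integrates to zero, hence vanishes, so $g$ is K\"ahler. (Alternatively, for $n=2$ one notes $\omega^{n-2}=1$ and invokes the general case.)

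The only genuinely delicate point is the sign bookkeeping in the double integration by parts: one needs $\int_M (\sqrt{-1}\,\partial\overline{\partial}\,\omega)\wedge\chi = \int_M \omega\wedge(\sqrt{-1}\,\partial\overline{\partial}\,\chi)$ up to an overall sign that does not affect the conclusion, using that $\partial\overline{\partial}=-\overline{\partial}\partial$ and that transferring each first-order operator across a form of appropriate parity via Stokes introduces a controlled sign. Everything else—nonnegativity of $\sigma$, the equivalence $\sigma=0\iff$ K\"ahler, and the specialization to $\chi=\omega^{n-2}$ and to $n=2$—is immediate from \S2 and Lemma \ref{lemma 3}.
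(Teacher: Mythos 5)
Your proposal is correct and is essentially identical to the paper's argument: the authors likewise use Lemma \ref{lemma 5} to kill the curvature term in the formula for $\sqrt{-1}\,\partial\overline{\partial}\,\omega$, obtain $\sqrt{-1}\,\partial\overline{\partial}\,\omega=\sigma\geq 0$, and integrate $\sigma\wedge\chi$ over the compact manifold to force $\sigma\equiv 0$, hence K\"ahlerity by Lemma \ref{lemma 3}. The specializations to $\chi=\omega^{n-2}$ and to $n=2$ are handled the same way.
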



In particular, if a compact complex threefold $M^3$ admits a
K\"ahler-like metric that is non-K\"ahler, then $M^3$ can not have
any pluriclosed metric (aka SKT metric, or Strongly K\"ahler with
Torsion).

\vsv

Boothby showed in \cite{Boothby} that any compact Hermitian manifold
$(M^n,g)$ with $R^h=0$ must be a quotient of a complex Lie group.
Of course any such manifold will be K\"ahler-like. One such example
is the famous Iwasawa manifold:

\vsss

\noindent {\bf Example (Iwasawa Manifold).} Consider the complex Lie
group $G$ formed by all complex $3\times 3$ matrices $X$ in the form
\[ X = \left[ \begin{array}{lll} 1 & x & z \\ 0 & 1 & y \\ 0 & 0 & 1
\end{array} \right]. \] Denote by $\Gamma $ the discrete subgroup of
$G$ of matrices with $x$, $y$, $z$ all in ${\mathbb Z} +
\sqrt{\!-\!1}{\mathbb Z}$. $\Gamma $ acts on $G$ by left
multiplication, leaving the holomorphic $1$-forms $dx$, $dy$, and
$dz-xdy$ invariant. So the three $1$-forms descend down to the
quotient $M^3=\Gamma \backslash G$ and form a global frame of the
cotangent bundle. Using these three $1$-forms to be the unitary
frame, we get a Hermitian metric on $M^3$ that is Hermitian flat,
namely, $R^h=0$.

\vsss

Note that the above manifold is non-K\"ahler but balanced. In general, it would be a very interesting problem to classify all compact $3$-dimensional complex manifolds that admit non-K\"ahler, K\"ahler-like metrics.

\vsv

Notice that when $(M^n,g)$ is K\"ahler-like, we have $d\tau = - \ ^t\!\theta \wedge \tau$, thus for a given point $p$ in $M$, if $e$ is a tangent frame such that $\theta|_p=0$, then $\overline{e_l}T^k_{ij}=0$. This implies that $\overline{\partial }\gamma ' =0$ at $p$. By taking trace, we get that $\overline{\partial }\eta =0$.

\begin{lemma} \label{lemma 6}
If a Hermitian manifold $(M^n,g)$ is K\"ahler-like, then its torsion
$1$-form $\eta$ is holomorphic. The converse of this is also true if
$n=2$.
\end{lemma}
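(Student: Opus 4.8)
The plan is to extract, from the K\"ahler-like hypothesis via Lemma~\ref{lemma 5} and the first Bianchi identity (\ref{formula 3}), a concrete differential equation for the torsion coefficients $T^k_{ij}$, and then show it forces $\eta=\sum T^i_{ij}\varphi_j$ to be $\overline{\partial}$-closed of type $(1,0)$, i.e. holomorphic. First I would fix an arbitrary point $p\in M$ and invoke Lemma~\ref{lemma 4} to choose a unitary frame $e$ with $\theta|_p=0$. Since $g$ is K\"ahler-like, Lemma~\ref{lemma 5} gives $\,^t\!\Theta\wedge\varphi=0$, so the first Bianchi identity (\ref{formula 3}) collapses at $p$ to $d\tau=-\,^t\!\theta\wedge\tau$, and because $\theta|_p=0$ this says $d\tau|_p=0$. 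Writing $\tau_i=\sum_{j,k}T^i_{jk}\varphi_j\wedge\varphi_k$ and using $d\varphi=-\,^t\!\theta\wedge\varphi+\tau$ (so that $d\varphi_j|_p$ is of type $(2,0)$), the only way $d\tau_i|_p$ can acquire a $(2,1)$-component is through the $\overline{\partial}$-derivatives of the coefficients; hence $d\tau_i|_p=0$ forces $\overline{e_l}(T^i_{jk})|_p=0$ for all indices. This is exactly the statement already noted in the paragraph preceding the lemma: $\overline{\partial}\gamma'=0$ at $p$, and taking the trace $\overline{\partial}\eta=0$ at $p$. Since $p$ was arbitrary and $\eta$ is a $(1,0)$-form, $\eta$ is holomorphic.

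For the converse when $n=2$, the point is that in complex dimension two the torsion carries no more information than $\eta$, because the antisymmetry $T^i_{jk}=-T^i_{kj}$ together with $i,j,k\in\{1,2\}$ leaves very few independent components; concretely $\tau_1=2T^1_{12}\,\varphi_1\wedge\varphi_2$ and $\tau_2=2T^2_{12}\,\varphi_1\wedge\varphi_2$, while $\eta=T^1_{12}\varphi_2-T^2_{12}\varphi_1+\ldots$ — so up to relabeling $\eta$ and $\tau$ determine each other. The plan is: assume $\eta$ holomorphic, pick again a unitary frame with $\theta|_p=0$, and check directly that $\,^t\!\Theta\wedge\varphi=0$ at $p$ by using the Bianchi identity (\ref{formula 3}) in reverse: $\,^t\!\Theta\wedge\varphi=d\tau|_p$, and one computes $d\tau|_p$ from the holomorphy of the (essentially unique) torsion coefficients, which kills the $(2,1)$-part; the $(1,2)$-part vanishes because $\tau$ is $(2,0)$ and the remaining pieces are forced by skew-Hermitian bookkeeping. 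Then Lemma~\ref{lemma 5} gives K\"ahler-like.

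I expect the main obstacle to be the bookkeeping in the converse direction: one must be careful that "holomorphic $\eta$" really pins down \emph{all} of $\tau$ in dimension two (this uses $n=2$ in an essential way, and it is why the converse is not claimed for $n\ge 3$), and one must verify that the auxiliary $(1,2)$- and $(2,2)$-type terms arising when one differentiates $\tau$ and re-expresses via (\ref{formula 3}) genuinely cancel, rather than just the "obvious" $(2,1)$-term. A clean way to organize this is to work entirely at the chosen point $p$ where $\theta|_p=0$, so that all connection terms drop out and one is comparing two honest tensors; the identity $\,^t\!\Theta\wedge\varphi|_p=d\tau|_p$ then becomes a pointwise algebraic consequence of $\overline{e_l}(T^i_{jk})|_p=0$, which in turn is equivalent (for $n=2$) to $\overline{e_l}(\eta_j)|_p=0$, i.e. $\overline{\partial}\eta=0$. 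The forward direction is essentially immediate from the remark already in the text; the substance of the lemma is the $n=2$ converse, and the honest point-frame device is what makes it routine.
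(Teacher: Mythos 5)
Your proof is correct and follows essentially the same route as the paper: the forward direction is exactly the paragraph preceding the lemma (normal frame with $\theta|_p=0$, Bianchi identity \eqref{formula 3} plus Lemma \ref{lemma 5} giving $d\tau|_p=0$, hence $\overline{e_l}T^i_{jk}|_p=0$ and $\overline{\partial}\eta=0$ by taking trace). The paper merely asserts the $n=2$ converse, and your argument for it — that $\eta_1=-T^2_{12}$, $\eta_2=T^1_{12}$ exhaust the torsion in dimension two, so $\overline{\partial}\eta=0$ kills the $(2,1)$-part of $d\tau|_p$, which is all of $\,^t\Theta\wedge\varphi|_p$ — is the intended one and is sound.
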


Next let us consider the G-K\"ahler-like metrics, namely, those with
$\Theta_2=0$. Of course we are only interested in those that are
non-K\"ahler. The first G-K\"ahler-like but non-K\"ahler metric on
compact complex manifold was observed by Gray in \cite{Gray},
on Calabi threefolds discovered in \cite{Calabi}.

\vsss

\noindent {\bf Example (Calabi threefolds).} In 1958 Calabi
\cite{Calabi} discovered that $\mathfrak{X}=\mathfrak{X}^{\prime}
\times T^4$, with $\mathfrak{X}^{\prime}$ a hyperelliptic Riemann
surface with odd genus $g \geq 3$ and $T^4$ a real $4$-torus, can be
given a complex structure $J$ such that the resulting threefold
$(\mathfrak{X}, J)$ admits no K\"{a}hler metrics. Later Gray
\cite{Gray} showed that there exists a Hermitian metric which is
G-K\"{a}hler-like on $(\mathfrak{X},J)$.

\vsv

In more details, Calabi \cite{Calabi} proved that any orientable
hypersurface $M$ in $\mathbb{R}^7$ has a natural almost complex
structure $J$ induced from the space of purely imaginary octonions
which is isomorphic to $\mathbb{R}^7$, and $(M,g,J)$, with $g$ the
induced Riemannian metric from $\mathbb{R}^7$, has an almost
Hermitian structure. It was further proved in \cite{Calabi} (see
also Gray \cite{Gray1966}) that $(M,g,J)$ is Hermitian if and only
if $M$ is a minimal variety in $\mathbb{R}^7$. Based on these
results, Calabi began with a compact hyperelliptic Riemann surface
$\mathfrak{X}^{\prime}$, for example, the Riemann surface defined by
$\omega^2=\prod_{i=1}^{8} (z-z_i)$ where $z_i$ are distinct complex
numbers, and constructed three linearly independent Abelian
differentials which can be used to define an immersion $F_1$ from
$\widetilde{\mathfrak{X}}^{\prime}$, the universal Abelian covering
of $\mathfrak{X}^{\prime}$ locally into a minimal surface in
$\mathbb{R}^3$. Moreover, the covering transformations of
$F_1(\widetilde{\mathfrak{X}}^{\prime}) \subset \mathbb{R}^3$ are
given by translations in $\mathbb{R}^3$, By the results just
mentioned, the immersion $F:=F_1 \times Id:
\widetilde{\mathfrak{X}}^{\prime} \times \mathbb{R}^4 \rightarrow
\mathbb{R}^3 \times \mathbb{R}^4$ produces a Hermtian structure. It
can be proved that this complex structure $J$  is invariant under
translations in $\mathbb{R}^7$, Therefore, we can descend it to get
a compact Hermitian manifold $(\mathfrak{X},J)$ where
$\mathfrak{X}=\mathfrak{X}^{\prime} \times T^4$. Calabi proved that
$(\mathfrak{X}, J)$ does not admit any K\"ahler metrics.
Historically, Calabi threefolds was the first nontrivial example of
compact complex manifolds with zero first Chern class which are
diffeomorphic to K\"ahler manifolds but admit no K\"ahler metrics.

\vsv

Gray \cite{Gray1966} and \cite{Gray} further investigated the
curvature properties of such Hermitian manifolds. His result implies
that on $(\mathfrak{X}, g, J)$ with $g$ the the induced Riemannian
metric from $\widetilde{\mathfrak{X}}^{\prime} \times \mathbb{R}^4
\subset \mathbb{R}^7$, one has $ R_{xyuv} = R_{xyJuJv}$ for any real
tangent vectors $x$, $y$, $u$, $v$ on $\mathfrak{X}$. This means
that Calabi threefolds $(\mathfrak{X}, g, J)$ are G-K\"{a}hler-like.

\vsss

In the non-compact case, however, even in dimension $2$ there are
lots of such examples. For instance, we have the following:

\vsss

\noindent {\bf Example (G-K\"ahler-like surface).} Consider the
metric $g$ on ${\mathbb C}\times {\mathbb H}$ given by \[ \omega_g =
i(-iz_2+i\overline{z}_2)^2dz_1 \wedge d\overline{z}_1 +idz_2\wedge
d\overline{z}_2,\] where ${\mathbb H}$ is the upper half plane.
Write $(-iz_2+i\overline{z}_2)^2=\lambda =e^{2u}>0$ on ${\mathbb
H}$. Then under the natural frame of $\{ z_1, z_2\}$, we have
\begin{eqnarray*}
\theta_1 &= & \left[ \begin{array}{cc} du & -\lambda \overline{\mu} \\
 \mu & 0 \end{array} \right] , \ \ \ \ \ \ \
\theta_2 \ = \ \left[ \begin{array}{cc} 0 &  -\lambda \nu \\
\nu & 0 \end{array} \right], \\
\Theta_2 &  = &  d\theta_2 -\overline{\theta}_1 \wedge \theta_2 -
\theta_2 \wedge \theta_1 \ =
\ \left[ \begin{array}{cc} 0 &  -d(\lambda \nu )-\lambda \nu du \\
d\nu -\nu du & 0 \end{array} \right],
\end{eqnarray*}
where $\mu = u_2dz_1$, $\nu = u_{\overline{2}}dz_1$. Since
$u=\ln (-iz_2+i\overline{z}_2)$, $u_{\overline{2}2} =
- u_{\overline{2}}u_2$, $u_{\overline{2}\overline{2}} =
 - (u_{\overline{2}})^2$, and $d\lambda =2\lambda du$,
 we get $\Theta_2=0$, so the metric is G-K\"ahler-like.

\vsss

\noindent {\bf Remark:} For a non-compact complex manifold $M^n$, if
$g_0$ is a K\"ahler metric on $M$ and $f$ is a holomorphic function
$M$ that is nowhere zero, then $g=|f|^2g_0$ is K\"ahler-like, and is
non-K\"ahler if $n\geq 2$ and $f$ is not a constant.

\vsss

Next let us compute the curvatures $R$ and $R^h$ in terms of the
torsion components $T^i_{jk}$ and their derivatives. We have the
following:

\begin{lemma}  \label{lemma 7}
Let $(M^n,g)$ be a Hermitian manifold. Under any local unitary frame $e$, we have
\begin{eqnarray}
2T^k_{ij,\ \overline{l}} & = & R^h_{j\overline{l}i\overline{k}} -
R^h_{i\overline{l}j\overline{k}},
\label{formula 21}\\
R_{ijk\overline{l}} \ & = & T^l_{ij,k} + T^l_{ri} T^r_{jk} -
T^l_{rj} T^r_{ik},  \label{formula 22}
\end{eqnarray}
\begin{eqnarray}
R_{ij\overline{k}\overline{l}} & = &
T^l_{ij,\overline{k}} - T^k_{ij,\overline{l}} +
2T^r_{ij} \overline{T^r_{kl}} + T^k_{ri} \overline{ T^j_{rl} } +
T^l_{rj} \overline{ T^i_{rk} } - T^l_{ri} \overline{ T^j_{rk} } -
T^k_{rj} \overline{ T^i_{rl} },   \label{formula 23}\\
R_{k\overline{l}i\overline{j}} & = &
R^h_{k\overline{l}i\overline{j}} - T^j_{ik,\overline{l}} -
\overline{ T^i_{jl,\overline{k}} } + T^r_{ik} \overline{ T^r_{jl} }
- T^j_{rk} \overline{ T^i_{rl} } - T^l_{ri} \overline{ T^k_{rj} },
\label{formula 24}
\end{eqnarray}
where the index  $r$ is summed over $1$ through $n$, and the indices after the comma
denote the covariant derivatives with respect to the Hermitian connection $\nabla^h$.
\end{lemma}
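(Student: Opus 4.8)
The plan is to derive all four identities by comparing the curvature of the Levi--Civita connection $\nabla$ with that of the Chern connection $\nabla^h$, exploiting $\nabla=\nabla^h+A$. Work in a local unitary frame $e$: then $\theta_1=\theta+\gamma$, and by Lemma \ref{lemma 2} the tensor $\gamma=\gamma'+\gamma''$ and the matrix $\theta_2$ are given explicitly in terms of the torsion coefficients $T^i_{jk}$, with $\gamma'$ and $\theta_2$ of type $(1,0)$, $\gamma''$ of type $(0,1)$, $\gamma''=-\,{}^{t}\!\overline{\gamma'}$, while $\Theta$ and $\overline{\theta_2}\wedge\theta_2$ are of type $(1,1)$ (Lemmas \ref{lemma 1}--\ref{lemma 2}). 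Substituting $\theta_1=\theta+\gamma$ into the structure equations (6) and (7) and using $\Theta=d\theta-\theta\wedge\theta$, one gets
\[
\Theta_1=\Theta+(d\gamma-\theta\wedge\gamma-\gamma\wedge\theta)-\gamma\wedge\gamma-\overline{\theta_2}\wedge\theta_2
\]
and an analogous expansion of $\Theta_2$ through $d\theta_2$ and the cross terms $-\theta_2\wedge\gamma$, $-\overline{\gamma}\wedge\theta_2$. I would evaluate these at a fixed point $p$ in a unitary frame with $\theta|_p=0$ (Lemma \ref{lemma 4}); there the connection-matrix products drop, covariant differentiation becomes ordinary $d$, and only $d\gamma|_p$, $d\theta_2|_p$ and terms quadratic in the torsion remain. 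Since both sides of (21)--(24) are components of tensors, checking them at every such $p$ proves them in general --- keeping in mind that $d\varphi|_p=\tau|_p$ need not vanish, so $\tau$ still contributes quadratic torsion terms to the type $(2,0)$ parts.

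Next, decompose each side by bidegree $(2,0)/(1,1)/(0,2)$ and match coefficients of $\varphi_k\wedge\varphi_l$, $\varphi_k\wedge\overline{\varphi}_l$, $\overline{\varphi}_k\wedge\overline{\varphi}_l$ against the expansions (16)--(18). Identity (21) involves only $R^h$ and a covariant derivative of the torsion and falls out of the first Bianchi identity (3) for $\nabla^h$: taking the type $(2,1)$ component of (3) at $p$, the left side contributes $\sum T^i_{jk,\overline{l}}\,\overline{\varphi}_l\wedge\varphi_j\wedge\varphi_k$ (the $d(\varphi_j\wedge\varphi_k)$ pieces being of type $(3,0)$), while ${}^{t}\!\Theta\wedge\varphi$ is of pure type $(2,1)$ and equals $\sum R^h_{k\overline{l}j\overline{i}}\,\varphi_k\wedge\overline{\varphi}_l\wedge\varphi_j$ by (16); matching the coefficient of $\overline{\varphi}_l\wedge\varphi_i\wedge\varphi_j$ and using $T^k_{ij}=-T^k_{ji}$ gives (21). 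Identities (22) and (23) are read off $\Theta_2$, whose $(0,2)$ part vanishes by Lemma \ref{lemma 1}: expanding $d\theta_2|_p$ and the cross terms via (11), the $(2,0)$ part of $\Theta_2$ produces a formula for $R_{kl\overline{i}\overline{j}}$ in which the term $2T^r_{ij}\overline{T^r_{kl}}$ is exactly the $\sum\overline{T^m_{ij}}\tau_m$ contribution of $d\varphi=\tau$; taking its complex conjugate and using $R_{AB\overline{C}\overline{D}}=\overline{R_{CD\overline{A}\overline{B}}}$ yields (23), while the $(1,1)$ part of $\Theta_2$ produces a formula for $R_{l\overline{k}\overline{i}\overline{j}}$ which, via $R_{ijk\overline{l}}=-\overline{R_{l\overline{k}\overline{i}\overline{j}}}$ (reality together with the antisymmetry and pair-symmetry of $R$) and $T^i_{jk}=-T^i_{kj}$, yields (22). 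Finally, identity (24) is the $(1,1)$ part of $\Theta_1$ at $p$: the Chern curvature $\Theta$ survives and contributes $R^h_{k\overline{l}i\overline{j}}$ by (16), corrected by the $(1,1)$ parts of $d\gamma'$, $d\gamma''$ (the covariant-derivative terms $-T^j_{ik,\overline{l}}$ and $-\overline{T^i_{jl,\overline{k}}}$) and of $\gamma'\wedge\gamma''+\gamma''\wedge\gamma'$, $\overline{\theta_2}\wedge\theta_2$ (the three quadratic torsion terms); expanding via (11) and matching gives (24).

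The main obstacle is purely computational bookkeeping: one must track three bidegrees at once, keep straight which of the holomorphic or anti-holomorphic covariant derivative of $T^i_{jk}$ appears in each slot, and account correctly for all the quadratic torsion contributions --- those coming from the matrix wedge products $\gamma\wedge\gamma$, $\theta_2\wedge\gamma$, $\overline{\gamma}\wedge\theta_2$, $\overline{\theta_2}\wedge\theta_2$ as well as the extra ones generated by $d\varphi=\tau\neq0$ at the normal frame point. Once the raw expansions are in hand, passing to the displayed formulas requires only repeated use of $T^i_{jk}=-T^i_{kj}$, the reality of $R$, and the pair symmetry $R_{ABCD}=R_{CDAB}$; there is no conceptual subtlety beyond doing the index manipulations carefully and consistently.
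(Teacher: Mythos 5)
Your proposal is correct and follows essentially the same route as the paper, whose proof of Lemma \ref{lemma 7} is exactly the one-line ``straightforward computation'' from the structure equations, the Bianchi identities, and Lemma \ref{lemma 2} that you carry out in detail (with the standard normal-frame simplification $\theta|_p=0$ from Lemma \ref{lemma 4}, which the paper also uses freely elsewhere). Your bookkeeping of the bidegree decomposition, the extra quadratic torsion terms from $d\varphi|_p=\tau|_p$, and the passage between $R_{k\overline{l}\overline{i}\overline{j}}$, $R_{kl\overline{i}\overline{j}}$ and the displayed components via reality and pair symmetry all check out.
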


\begin{proof} Using the structure equations, the Bianchi identities, and Lemma \ref{lemma 2},  we get the above identities by a straight
forward computation.
\end{proof}

\vsv

As  an immediate consequence of Lemma \ref{lemma 7}, we have the
following:

\begin{lemma} \label{lemma 8}

Let $(M^n,g)$ be a Hermitian manifold. If $g$ is G-K\"ahler-like,
then
\begin{equation*}
\overline{\partial }\eta \wedge \omega^{n-1} =- \eta \wedge
\overline{\eta} \wedge \omega^{n-1}.
\end{equation*}

\end{lemma}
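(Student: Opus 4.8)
The plan is to take the curvature identity (\ref{formula 23}) from Lemma \ref{lemma 7}, specialize it to extract information about the torsion $1$-form, and then wedge against $\omega^{n-1}$ to kill the unwanted terms. Since $g$ is G-K\"ahler-like, Lemma \ref{lemma 5} gives $\Theta_2=0$, hence in particular $R_{ij\overline{k}\overline{l}}=0$ for all indices. So the right-hand side of (\ref{formula 23}) vanishes identically:
\[
T^l_{ij,\overline{k}} - T^k_{ij,\overline{l}} + 2T^r_{ij} \overline{T^r_{kl}} + T^k_{ri} \overline{ T^j_{rl} } + T^l_{rj} \overline{ T^i_{rk} } - T^l_{ri} \overline{ T^j_{rk} } - T^k_{rj} \overline{ T^i_{rl} } = 0 .
\]
The idea is to contract this over suitable index pairs. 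Recall $\eta = \sum T^i_{ij}\varphi_j$, so the relevant trace quantity is $T^i_{ik}$. I would set $j=k$ and sum over $j=k$ (equivalently contract the second lower index with the second upper/barred index), which should produce an identity of the schematic form $\sum_{j}\big(T^l_{ij,\overline{j}} - T^j_{ij,\overline{l}}\big) = (\text{quadratic torsion terms involving } \eta)$. Translating covariant derivatives of torsion back into $\overline{\partial}$ of the $1$-forms $\gamma'$ and $\eta$, the left side becomes (up to a sign and a codifferential term) related to $\overline{\partial}\eta$, while the right side will assemble into $\eta\wedge\overline{\eta}$ after the appropriate wedge with $\omega^{n-1}$.

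The cleaner route is probably to work directly with forms rather than fully indexed components. Wedging $R_{ij\overline{k}\overline{l}}=0$, suitably summed, with $\overline{\varphi}$'s and $\varphi$'s and ultimately with $\omega^{n-1}$ has the effect that only the totally-traced pieces survive, since $\varphi_k\wedge\overline{\varphi}_k\wedge\omega^{n-1}$ is the only nonvanishing type of term and mixed terms $\varphi_k\wedge\overline{\varphi}_l\wedge\omega^{n-1}$ with $k\neq l$ die. This is exactly the mechanism already used in the paper for (\ref{formula 15}) and in Lemma \ref{lemma 3}. So I expect: the derivative terms $T^l_{ij,\overline{k}}-T^k_{ij,\overline{l}}$, after contraction and wedging, give $c_1\,\overline{\partial}\eta\wedge\omega^{n-1}$; the $2T^r_{ij}\overline{T^r_{kl}}$ term, being a "bisectional" contraction of torsion with its conjugate, wedges to zero against $\omega^{n-1}$ or gets absorbed by the antisymmetrization $ij\leftrightarrow$ barred $kl$; and the remaining four quadratic terms collapse to $c_2\,\eta\wedge\overline{\eta}\wedge\omega^{n-1}$. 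Matching the constants $c_1,c_2$ (which I expect to come out equal in magnitude with the stated opposite sign) yields the claimed identity $\overline{\partial}\eta\wedge\omega^{n-1} = -\,\eta\wedge\overline{\eta}\wedge\omega^{n-1}$.

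The main obstacle is bookkeeping: correctly performing the contraction of (\ref{formula 23}) so that the covariant-derivative terms really do reorganize into a pure $\overline{\partial}\eta$ (as opposed to $\overline{\partial}$ of a non-closed trace plus lower-order torsion-squared terms), and verifying that every quadratic term other than the $\eta\wedge\overline{\eta}$ combination genuinely annihilates $\omega^{n-1}$. Concretely, one must check that the "non-trace" pieces of the quadratic torsion expression, when wedged with $\omega^{n-1}$, vanish for the same type reason that $\sigma_1,\sigma_2$ are genuinely $(1,1)$ — i.e., that after antisymmetrization in the appropriate index slots they reduce to contractions already appearing in $\sigma_1$-type expressions, which wedge trivially with the top-power form. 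A secondary subtlety is a sign/normalization check against (\ref{formula 15}), since the relation $\partial\omega^{n-1}=-2\eta\wedge\omega^{n-1}$ fixes the normalization of $\eta$ and hence the constant in front of $\overline{\partial}\eta\wedge\omega^{n-1}$; getting that factor wrong would spoil the clean form of the conclusion. Modulo these checks, the proof is a direct computation from Lemma \ref{lemma 7}, exactly as the (brief) proof in the paper presumably asserts.
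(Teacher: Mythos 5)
Your proposal follows essentially the same route as the paper: impose $\Theta_2=0$ so that $R_{ij\overline{k}\overline{l}}=0$ in (\ref{formula 23}), trace the resulting torsion identity, and read off the conclusion after wedging with $\omega^{n-1}$ (the paper works at a point $p$ in a unitary frame with $\theta|_p=0$, which is the device that turns the covariant derivatives into a genuine $\overline{\partial}\eta$). Two bookkeeping corrections, though: the contraction you need is the \emph{double} trace $k=i$, $l=j$ summed over both pairs (your single contraction $j=k$ leaves free indices $i,l$ and quadratic terms that do not reduce to $\eta$), which yields $\sum_i \eta_{i,\overline{i}}=\sum_r|\eta_r|^2$ directly; and the term $2T^r_{ij}\overline{T^r_{kl}}$ does not vanish or get absorbed on its own --- under the double trace it contributes $+2\|T\|^2$, which cancels exactly against the $-2\|T\|^2$ produced by the last two quadratic terms, leaving only the $|\eta|^2$ contribution. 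With those two points fixed, your argument is the paper's proof.
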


\begin{proof}
Let us fix any point $p$ in $M^n$ and let $e$ be a
unitary frame in a neighborhood of $p$ such that $\theta|_p=0$.
Since $\Theta_2=0$, by taking $k=i$, $l=j$ in (23) of Lemma 7 and
sum them over, we get
$$  \sum_{i=1}^n \eta_{i,\overline{i}} = \sum_{r=1}^n |\eta_r|^2 ,$$
so Lemma 8 is proved.

\end{proof}


Now we are ready to prove Theorem \ref{balancedresult}.


\begin{proof}
[\textbf{Proof of Theorem \ref{balancedresult}}] Suppose $(M^n,g)$
is a compact Hermitian manifold with $n\geq 2$. By (\ref{formula
15}), we get
\begin{eqnarray*}
\overline{\partial }\partial \omega^{n-1} &= &
\overline{\partial }(-2\eta \wedge \omega^{n-1}) \\
& = & -2 \overline{\partial }\eta \wedge \omega^{n-1} - 4  \eta
\wedge \overline{\eta} \wedge \omega^{n-1}.
\end{eqnarray*}
If $g$ is G-K\"ahler-like, then the above calculation leads to
\[
\overline{\partial }\partial \omega^{n-1} = - 2 \eta \wedge
\overline{\eta} \wedge \omega^{n-1}  = - 2 ||\eta||^2 \omega^n.
\]
Now since $M$ is compact, integrating over $M$ would yield $\eta
=0$.

If instead $g$ is K\"ahler-like, then $\overline{\partial }\eta =0$
by Lemma \ref{lemma 6}, and the above argument again yields $\eta
=0$. So in either case $(M^n,g)$ is balanced.

\end{proof}


\section{The proofs of Theorems \ref{result1} and \ref{result2}}

In this section, we will give proofs to Theorems \ref{result1}
 and \ref{result2} stated in the introduction. First
let us assume that $(M^n,g)$ is a Hermitian manifold that is both
K\"ahler-like and G-K\"ahler-like. Fix any point $p\in M$, and let
$e$ be a unitary frame near $p$ such that $\theta|_p=0$. Since $g$
is K\"ahler-like, we have $R^h_{i\overline{j} k\overline{l}} =
R^h_{k\overline{j}i\overline{l}}$, thus at the point $p$ it holds
$$ T^k_{ij, \ \overline{l}} =0.$$
Therefore, by formula (\ref{formula 23}) in Lemma \ref{lemma 7}, we
know that

\begin{lemma}  \label{lemma 9}
If a Hermitian manifold $(M^n,g)$ is both K\"ahler-like and
G-K\"ahler-like, then under any unitary frame the following identity
\begin{equation}
2 \sum_{r=1}^n T^r_{ij} \overline{T^r_{kl}} \ = \  \sum_{r=1}^n \{
T^l_{ri} \overline{ T^j_{rk} } + T^k_{rj} \overline{ T^i_{rl} } -
T^k_{ri} \overline{ T^j_{rl} } -  T^l_{rj} \overline{ T^i_{rk} } \}
\label{formula 25}
\end{equation}
holds for any indices $i$, $j$, $k$, $l$. In particular, $M$ must be
balanced.
\end{lemma}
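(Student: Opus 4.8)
The plan is to substitute the two hypotheses directly into the curvature identity (\ref{formula 23}) of Lemma~\ref{lemma 7} and then read off both conclusions. First I would use the K\"ahler-like condition: formula (\ref{formula 21}) reads $2T^k_{ij,\overline{l}} = R^h_{j\overline{l}i\overline{k}} - R^h_{i\overline{l}j\overline{k}}$, and the K\"ahler-like symmetry is exactly the equality of $R^h$ in its first and third slots, so the right-hand side vanishes and hence $T^k_{ij,\overline{l}} = 0$ for all indices, under any unitary frame. (Both sides of (\ref{formula 21}) are components of honest tensors, so this needs no special frame; but if one prefers, one may simply work at a point $p$ with $\theta|_p=0$, as in the discussion preceding the lemma.) Next I would use the G-K\"ahler-like condition in the guise of Lemma~\ref{lemma 5}, i.e. $\Theta_2 = 0$, which by (17) forces $R_{ij\overline{k}\overline{l}} = 0$ identically.

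Plugging both facts into (\ref{formula 23}) annihilates the left-hand side together with the two covariant-derivative terms $T^l_{ij,\overline{k}}$ and $T^k_{ij,\overline{l}}$, and what remains is precisely the quadratic torsion identity (\ref{formula 25}). Since (\ref{formula 25}) involves only the torsion components $T^i_{jk}$ at a point and no derivatives, and the torsion transforms tensorially under unitary changes of frame, the identity holds in every unitary frame and at every point of $M$.

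For the ``in particular'' clause I would contract (\ref{formula 25}) by setting $k=i$, $l=j$ and summing over $i$ and $j$ (in addition to the already-summed index $r$). The left-hand side becomes $2\sum_{i,j,r}|T^r_{ij}|^2$. On the right, the terms $T^l_{ri}\overline{T^j_{rk}}$ and $T^k_{rj}\overline{T^i_{rl}}$ each collapse, after relabeling dummy indices, to $\sum_{i,j,r}|T^r_{ij}|^2$, while the terms $-T^k_{ri}\overline{T^j_{rl}}$ and $-T^l_{rj}\overline{T^i_{rk}}$ each collapse to $-\sum_r\big|\sum_i T^i_{ri}\big|^2 = -\sum_r|\eta_r|^2$, using $T^i_{ri}=-T^i_{ir}$ and the definition (14) of the torsion $1$-form $\eta=\sum_{i,j}T^i_{ij}\varphi_j$. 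The two copies of $2\sum|T^r_{ij}|^2$ cancel, leaving $\sum_r|\eta_r|^2=0$; hence $\eta\equiv 0$, and by (\ref{formula 15}) the metric is balanced. Note that this argument uses neither compactness nor $n\le 3$.

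The computation is essentially routine once Lemma~\ref{lemma 7} is in hand; the only points requiring care are (i) checking that (\ref{formula 25}) really is a frame-independent pointwise identity even though it was obtained using the curvature-symmetry hypothesis through $T^k_{ij,\overline{l}}=0$, and (ii) keeping the index bookkeeping straight in the contraction so that the mixed terms reassemble into $\|\eta\|^2$ rather than into some other quadratic trace of the torsion.
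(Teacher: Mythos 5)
Your proposal is correct and follows essentially the same route as the paper: derive $T^k_{ij,\overline{l}}=0$ from the K\"ahler-like symmetry via (\ref{formula 21}), set $R_{ij\overline{k}\overline{l}}=0$ from $\Theta_2=0$ in (\ref{formula 23}) to obtain (\ref{formula 25}), and then contract with $k=i$, $l=j$ to get $2\|T\|^2=2\|T\|^2-2\|\eta\|^2$ and hence $\eta=0$. Your added remark on frame-independence (the covariant derivatives being tensorial, so no normal frame is strictly needed) is a correct clarification of a point the paper handles by working at a point with $\theta|_p=0$.
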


\begin{proof}

We are only left to prove the last statement, namely, the torsion
$1$-form $\eta$ is zero. By taking $k=i$, $l=j$ in (\ref{formula
25}) and sum over $i$ and $j$, we get
$$ 2||T||^2 = 2||T||^2 - 2 ||\eta ||^2 ,$$
where $||T||^2 = \sum_{i,j,k=1}^n |T^k_{ij}|^2$ and $||\eta ||^2=
\sum_{k=1}^n |\sum_{i=1}^n T^i_{ik}|^2$. Hence $\eta =0$.
\end{proof}


\begin{proof}
[\textbf{Proof of Theorem \ref{result1}}] Let $(M^n,g)$ be a
Hermitian manifold such that the Riemannian curvature tensor $R$ and
the Hermitian curvature tensor $R^h$ are equal to each other. Then
both $R$ and $R^h$ satisfy all symmetry conditions of the curvature
tensor of a K\"ahler manifold, so $g$ will be both K\"ahler-like and
G-K\"ahler-like. Thus by Lemma \ref{lemma 9}, we know that the
formula (\ref{formula 25}) holds. Fix any $p$ in $M$ and choose a
unitary frame $e$ such that $\theta |_p=0$. Since $R=R^h$, the
formula (\ref{formula 24}) in Lemma \ref{lemma 7} gives
\begin{equation}
\sum_{r=1}^n T^r_{ik} \overline{ T^r_{jl} }  \ = \ \sum_{r=1}^n  \{
T^j_{rk} \overline{ T^i_{rl} } + T^l_{ri} \overline{ T^k_{rj} } \}
\label{formula 26}
\end{equation}
for any indices $i$, $j$, $k$, $l$. By letting $j=i$, $l=k$ in
(\ref{formula 26}), we get \[ \sum_{r=1}^n |T^r_{ik}|^2 =
\sum_{r=1}^n \{ |T^i_{rk}|^2 + |T^k_{ri}|^2 \}. \] If we sum over
$i$ and $k$, it leads to $||T||^2=2||T||^2$, hence $T=0$ and $g$ is
K\"ahler. This completes the proof of Theorem \ref{result1}.
\end{proof}


\begin{proof} [\textbf{Proof of Theorem \ref{result2}}]
Now let us consider a Hermitian manifold $(M^n,g)$ which is both
K\"ahler-like and G-K\"ahler-like. By Lemma \ref{lemma 9}, we know
that (\ref{formula 25}) holds and $g$ is balanced. Letting $k=i$ and
$l=j$ in (\ref{formula 25}), we get
\begin{equation}
2 \sum_{r=1}^n |T^r_{ij}|^2  \ = \ \sum_{r=1}^n  \{  |T^i_{rj}|^2  +
| T^j_{ri}|^2 - 2Re ( \overline{ T^i_{ri} } T^j_{rj} ) \}.
\label{formula 27}
\end{equation}
Also, by formula (\ref{formula 22}) in Lemma (\ref{lemma 7}), we get
$T^l_{ij,k} = \sum_{r=1}^n \{ -  T^l_{ri} T^r_{jk} + T^l_{rj}
T^r_{ik}\} $ for any indices $i$, $j$, $k$, and $l$. Letting $l=j=s$
and sum over $s$, and using the fact that the metric is a balanced
one, we get
\begin{equation}
\sum_{r,s=1}^n T^s_{ri} T^r_{sk} \ = \ 0  \label{formula 28}
\end{equation}
for any indices $i$ and $k$.

Now we are ready to prove Theorem \ref{result2}. When $n=2$,
balanced metrics are K\"ahler, so $g$ is already K\"ahler. Now
assume that $n=3$. Let $(ijk)$ be any cyclic permutation of $(123)$.
Write $a_i=T^i_{jk}$, $b_i=T^j_{ij}=-T^k_{ik}$. The last equality
holds true because of the fact that $\eta =0$.

Since $T^k_{ij}=-T^k_{ji}$, the identities (\ref{formula 27}) and
(\ref{formula 28}) lead to
\begin{eqnarray}
& & |a_i|^2 + |a_j|^2 - 2|a_k|^2  \ = \  |b_i|^2 + |b_j|^2 -
2|b_k|^2,
\label{formula 29}\\
& & b_i b_j \ = \ b_k a_k,  \label{formula 30}\\
& & a_i a_j \ = \ b_k^2   \label{formula 31}
\end{eqnarray}
whenever $(ijk)$ is a cyclic permutation of $(123)$. Note that
(\ref{formula 31}) is obtained by letting $i=k$ in (\ref{formula
28}) first.

If one of the $b_i$ is zero, say, $b_1=0$, then by (\ref{formula
30}), $b_2b_3=0$. Without loss of generality, let us assume that
$b_2=0$. If $b_3\neq 0$, then by (30) $b_1b_2=b_3a_3$ so $a_3=0$,
and by (31), $a_1a_2=b_3^2\neq 0$. But then (29) gives
$|a_1|^2+|a_2|^2 = -2 |b_3|^2$, a contradiction. So we must have
$b_3=0$ as well. In this case, (\ref{formula 31}) implies that at
least two of the $a_i$'s must be zero, while (\ref{formula 29})
implies that the third one is also zero. So all $a_i$ and $b_i$ are
zero, that is, $T=0$, thus $g$ is K\"ahler.

Now assume that $b_1b_2b_3\neq 0$. Then by (\ref{formula 30}), we
have $a_k=\frac{b_ib_j}{b_k}$. By letting $l=i$ in formula
(\ref{formula 25}) in Lemma \ref{lemma 9}, we get through a direct
computation that \[ b_j \overline{b_k} + b_i\overline{a_j} + a_k
\overline{b_i} =0. \] Plugging in $a_j=(b_ib_k)/b_j$ and
$a_k=(b_ib_j)/b_k$, we get \[ b_j\overline{b_k} (1+
\frac{|b_i|^2}{|b_j|^2} + \frac{|b_i|^2}{|b_k|^2}) =0,\]
contradicting to the assumption that $b_i$'s are non-zero. This
completes the proof of Theorem \ref{result2} for the case when
$n=3$.

\vsv

Now let us assume that $n\geq 2$ is arbitrary but $M^n$ is compact.
Let $e$ be a local unitary frame. For a tensor $P^k_{ij}$ of type
(1,2), we will denote by
$$P^k_{ij,l}  \ \  \mbox{and} \ \  P^k_{ij,\overline{l}}$$ the covariant
differentiation of $P$ with respect to the Hermitian connection
$\nabla^h$. So if $e$ is a frame such that $\theta|_p=0$ at the
fixed point $p$, then at the point $p$ we have $
P^k_{ij,l} = e_l( P^k_{ij} )$ and $
P^k_{ij,\overline{l} }= \overline{e}_l ( P^k_{ij} ) $. In
particular, by the fact that $g$ is both K\"ahler-like and
G-K\"ahler-like, we get formula (\ref{formula 25}) and its special
case (\ref{formula 27}), as well as the following
\begin{eqnarray}
T^k_{ij,\overline{l} } & = & 0,   \label{formula 32} \\
T^k_{ij,l} & = & \sum_{r=1}^n \{ -  T^k_{ri} T^r_{jl} + T^k_{rj}
T^r_{il}\}  \label{formula 33}
\end{eqnarray}
for any indices $i$, $j$, $k$, and $l$.  Now we use the assumption
that $M$ is compact. Note that if $f$ is any smooth function on
$M^n$ such that $Lf:=\sum_{l=1}^n f_{,l\overline{l}}\geq 0$
everywhere, then by Bochner's Lemma (see \cite{Boothby}), $Lf=0$
everywhere, and $f$ is a constant. Consider the function
$f=\sum_{i,j,k=1}^n |T^k_{ij}|^2$ under any unitary frame $e$. Then
we have
\begin{eqnarray*}
L f & = & ( T^k_{ij,l} \overline{T^k_{ij}}  )_{,\overline{l}} \ =
\  |T^k_{ij,l}|^2 + (T^k_{ij,l})_{,\overline{l}} \ \overline{T^k_{ij}} \\
& = & |T^k_{ij,l}|^2 + ( -  T^k_{ri} T^r_{jl} + T^k_{rj}
T^r_{il})_{,\overline{l}} \overline{T^k_{ij}} \ = \  |T^k_{ij,l}|^2,
\end{eqnarray*}
where the third equality above is because of (\ref{formula 33}),
while the others are because of (\ref{formula 32}). So we have
$T^k_{ij,l}=0$ and
\begin{equation}
\sum_{r=1}^n   T^k_{ri} T^r_{jl}  = \sum_{r=1}^n  T^k_{rj} T^r_{il}
\label{formula 34}
\end{equation}
for any indices $i$, $j$, $k$, and $l$.

Write $V=T^{1,0}_p M$ for a given point $p \in M$, and then for any
$X=\sum_i X_ie_i\in V$, let us denote by $A_X$ the $n\times n$
matrix $(\sum_i X_iT_{ij}^k)_{j,k=1}^n$, which represents a linear
transformation from $V$ into itself. By multiplying $X_iX_l$ onto
(\ref{formula 34}) and adding up $i$ and $l$, we get $(A_X)^2=0$.
Also, for $Y=\sum_i Y_ie_i $ in $V$, if we respectively multiplying
$X_iY_l$ or $X_lY_i$ onto (\ref{formula 34}) and adding up $i$ and
$l$, we get $A_XA_Y=-A_YA_X$.

\vsv

{\bf Claim 1:} There exists $W\in V$ such that $A_Y(W)=0$ for any
$Y$ in $V$.

\vsv

To see this, it suffices to prove the following slightly more
general statement about anti-commutative system of step-$2$
nilpotent matrices:

\vsv

{\bf Claim 2:} For any given integer $m$, Let $\{ A_1 , \ldots ,
A_m\}$ be a set of $n\times n$ complex matrices satisfying the
condition
\begin{equation}
A_iA_j=-A_jA_i, \   \ \ \ \forall \ 1\leq i, j\leq m.
\end{equation}
Then $\bigcap_{i=1}^m N(A_i) \neq 0$, where $N(A_i)$ denotes the kernel of $A_i$.

\vsv

We will use induction on $n$ to prove Claim 2. We may assume that
these $A_i$ are linearly independent, as otherwise we could just
reduce the number $m$. When $n=2$, since $A_1^2=0$, there exists
non-singular $2\times 2$ matrix $P$ such that $PA_1P^{-1}=E$, where
\begin{equation*}
E=\left[ \begin{array}{ll} 0 & 1 \\ 0 & 0 \end{array} \right]
\end{equation*}
For any $i\geq 2$, since $PA_iP^{-1}$ is nilpotent and anti-commutative with $E$, it must be in the form $a_iE$ for some constant $a_i$. So all these $A_i$ have common kernel.

\vsv

For general $n$, let us assume that $A_1$ has the largest rank among all linear combinations of these $A_i$. By a base change, we know that there exists a non-singular matrix $P$ such that $PA_1P^{-1}$ takes the form
\begin{equation*}
PA_1P^{-1} = \left[ \begin{array}{ccc} 0 & 0 & I_k \\ 0 & 0 & 0 \\ 0 & 0 & 0 \end{array} \right]
\end{equation*}
where $I_k$ is the identity matrix and $k=\mbox{rank}(A_1)$ where
$2k \leq n$. For any $i\geq 2$, since the rank of $\lambda A_1 +
A_i$ is at most $k$ for any $\lambda \in {\mathbb C}$, we know that
the lower left corner of $PA_iP^{-1}$ must be zero:
\begin{equation*}
PA_iP^{-1} = \left[ \begin{array}{ccc} B_i & \ast & \ast\\ 0 & 0 &
\ast  \\ 0 & 0 & -B_i \end{array} \right].
\end{equation*}
(The lower right corner is $-B_i$ because $A_1A_i=-A_iA_1$.) Note
that these $k\times k$ matrices $\{ B_i\}$ also satisfy $(35)$, so
by induction on $n$, we know that these $B_i$, thus all the $A_i$,
will have a common kernel. This proves Claim 2, hence Claim 1.

There is an alternative proof of Claim 2, which is constructive in
nature and might be interesting in its own right. \footnote{The
authors are indebted to an anonymous referee for suggesting the
alternative proof.} The proof goes as follows:

Let us suppose that dim $V = n$ and rank$A_X = k > 0$, then $n =
2k+l$, with $l \neq 0$. In this situation we can find a basis $\{
v_1, \ldots ,  v_k, x_1, \ldots ,  x_l,  y_1, \ldots , y_k\} $  of
$V$ such that $\{ v_1, \ldots ,  v_k\}$  is a basis for $V_1 =
\mbox{Im}A_X$, $x_1=X$,  and such that $A_Xy_i = v_i$ and $\{ v_1,
\ldots , v_k, x_1, \ldots , x_l\}$  is a basis for ker$A_X$. Note
that in our situation we have $(A_Y )^2 = 0$, $A_Y A_Z = - A_ZA_Y$,
and $A_Y Z = - A_ZY$. Of course, $A_{\mu Y +\nu Z} = \mu A_Y + \nu
A_Z$. We take  $W_1 = A_{y_1}A_{y_2} \cdots A_{y_{k-1}} (v_k)$.
Because $A_{y_i}v_j = A_{y_i}A_Xy_j = A_XA_{y_j} y_i = -A_{y_j}
v_i$,  $i \neq j$, note that
$$ W_1 = (-1)^{k-i} A_{y_1} \cdots A_{y_{i-1}} A_{y_{i+1}}  \cdots A_{y_k} (v_i) $$
Also note that $A_{v_i}v_j = A_{v_i} A_Xy_j = -A_{v_i}A_{y_j}X =
A_{y_j}A_{v_i}X = -A_{y_j}A_Xv_i = 0$. Moreover, $A_{x_i}v_j =
A_{x_i}A_Xy_j = -A_{y_j} A_Xv_i = 0$. Therefore $A_{v_i}W_1 = 0$ and
$A_{x_i}W_1 = 0$. Another remark is that $A_{y_i}v_i = 0$.
Therefore, $A_{y_i}W_1 = 0$. In summary, $A_ZW_1 = 0$, for all $Z
\in  V$. If $W_1 \neq 0$, we would have proved Claim 2.

\vsv

If $W_1 = 0$, we choose a set of $k - 1$ indices. For instance, we consider the
indices $1, \ldots , k-2, k-1$ and take the element
$W_2 = A_{y_1}A_{y_2} \cdots A_{y_{k-2}} (v_{k-1})$.
We already have $A_{y_k}W_2 = 0$, because $W_1 = 0$. Moreover, as before,
$A_{v_i}W_2 = 0$, $A_{x_j}W_2 = 0$ and $A_{y_i}W_2 = 0$, for $i = 1;, \ldots ,  k$ and $j = 1, \ldots , l$.
If $W_2 \neq 0$ for some set of $k - 1$ indices, we would have proved the claim.
If $W_2 = 0$ for all set of $k-1$ indices, we would choose a set of $k-2$ indices,
etc. For some set of $k - j$ indices, we have to obtain $W_{j+1} \neq  0$. In the worst
situation, we would have done $k$ steps and $W_{k+1} = v_1 \neq 0$. Because of the
previous steps $A_Zv_1 = 0$, for all $Z \in  V$ and the claim would be proved.

\vsv

Now that Claim 1 is established, there exists non-zero tangent
vector $X$ in $T_{p}^{1,0}M$ such that $T^k_{Xj}=0$ for any $j$,
$k$. Let us choose unitary frame $e$ so that $X$ is parallel to
$e_n$. Then we have $T^k_{nj}=0$ for any $j$, $k$. Let $i=n$ in
(27), we get
$$ \sum_{r=1}^n |T^n_{rj}|^2 =0 $$
for any $j$. Therefore $T^n_{jk}=0$ for any $j$, $k$. That is, the
components of the torsion tensor $T^k_{ij}=0$ whenever any of the
indices is $n$. Repeating this argument, we conclude that
$T^k_{ij}=0$ whenever any of the indices is greater than $2$. Then
$T$ must be $0$ since $g$ is balanced, and this completes the proof
that $g$ must be K\"ahler.
\end{proof}

\section{The conformal change of metrics and bisectional curvatures}

Let $(M^n,g)$ be a Hermitian manifold, $u\in C^{\infty }(M)$ a
real-valued smooth function, and $\tilde{g}=e^{2u}g$ a conformal
change of the metric.

\vsv

Let $e$ be (the column vector of) a local unitary frame of $g$, with
(the column vector of) the dual coframe $\varphi$. Then
$\tilde{e}=e^{-u}e$ and $\tilde{\varphi }=e^u\varphi$ are local
unitary frame and coframe with respect to the metric $\tilde{g}$.

\vsv

Denote by $\tilde{\theta}$ and $\tilde{\Theta}$ the matrix of
Hermitian connection and Hermitian curvature of the metric
$\tilde{g}$ with respect to the unitary frame $\tilde{e}$, then it is
easy to see that
$$ \tilde{\theta } = \theta + (\partial u-\overline{\partial }u)I,
\ \ \ \ \tilde{\Theta } = \Theta -2 \partial \overline{\partial }u
I, \ \ \ \ \ $$ where $\theta$ and $\Theta$ are the matrix of
Hermitian connection and Hermitian curvature of $g$ under $e$. From
that, we get \[ \tilde{\tau} = e^u (\tau + 2\partial u \wedge
\varphi )  \ \ \ \ \mbox{and}\]
\begin{equation}
e^u \tilde{T}^i_{jk} = T^i_{jk} + u_j\delta_{ik} - u_k \delta_{ij}
\end{equation}
where $u_j=e_j(u)$. Using Lemma \ref{lemma 2}, we get the following:


\begin{lemma}  \label{lemma 11}
Let $e$, $\tilde{e}=e^{-u}e$ be the local unitary frames for $g$ and
$\tilde{g}=e^{2u}g$, respectively. Then the connection matrixes are
related as
\begin{eqnarray}
\tilde{\theta}_1 & = & \theta_1 + v\  ^t\!\varphi -
\overline{\varphi }\ v^{\ast }, \\
\tilde{\theta}_2 & = & \theta_2  + \overline{v} \  ^t\!\varphi  -
\varphi \ v^{\ast }
\end{eqnarray}
where $v=\ ^t\!(u_1, \ldots , u_n)$.
\end{lemma}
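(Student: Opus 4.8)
The plan is to compute $\tilde\theta_1$ and $\tilde\theta_2$ directly from the formulas in Lemma 2.2, using the already-established transformation rule
\[
e^u \tilde T^i_{jk} = T^i_{jk} + u_j\delta_{ik} - u_k\delta_{ij}
\]
for the Chern torsion coefficients under $\tilde g = e^{2u}g$, together with the fact that the coframe rescales as $\tilde\varphi_k = e^u\varphi_k$. Since both $e$ and $\tilde e$ are unitary, Lemma 2.2 applies verbatim to each metric: $(\tilde\theta_2)_{ij} = \sum_k \overline{\tilde T^k_{ij}}\,\tilde\varphi_k$ and $\tilde\gamma_{ij} = \sum_k(\tilde T^j_{ik}\tilde\varphi_k - \overline{\tilde T^i_{jk}}\,\overline{\tilde\varphi}_k)$, and similarly without tildes. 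So the entire computation reduces to substituting the torsion transformation rule and collecting terms.

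First I would handle $\tilde\theta_2$: from $\overline{\tilde T^k_{ij}}\,\tilde\varphi_k = e^{-u}(\overline{T^k_{ij}} + \overline u_i\delta_{kj} - \overline u_j\delta_{ki})\cdot e^u\varphi_k = (\overline{T^k_{ij}} + \overline u_i\delta_{kj} - \overline u_j\delta_{ki})\varphi_k$, summing over $k$ gives $(\tilde\theta_2)_{ij} = (\theta_2)_{ij} + \overline u_i\varphi_j - \overline u_j\varphi_i$. Recognizing this as the $(i,j)$ entry of $\theta_2 + \overline v\,{}^t\!\varphi - \varphi\, v^{\ast}$ (where $v = {}^t\!(u_1,\dots,u_n)$, so $(\overline v\,{}^t\!\varphi)_{ij} = \overline u_i\varphi_j$ and $(\varphi\,v^{\ast})_{ij} = \varphi_i\overline u_j$) establishes the second formula. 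Next I would do the analogous substitution for $\tilde\gamma$: the $(1,0)$ part contributes $\tilde T^j_{ik}\tilde\varphi_k = (T^j_{ik} + u_i\delta_{jk} - u_k\delta_{ij})\varphi_k$, summing to $\gamma'_{ij} + u_i\varphi_j - \delta_{ij}\sum_k u_k\varphi_k$; combined with its conjugate-type counterpart for $\gamma''$ this yields $\tilde\gamma = \gamma + v\,{}^t\!\varphi - \overline\varphi\,v^{\ast} - (\text{a scalar-matrix correction})$. Finally, using $\tilde\theta_1 = \tilde\gamma + \tilde\theta$ together with the known conformal change $\tilde\theta = \theta + (\partial u - \overline\partial u)I$, the scalar correction terms should cancel exactly against the $\partial u - \overline\partial u$ contribution (note $\partial u = \sum_k u_k\varphi_k$), leaving $\tilde\theta_1 = \theta_1 + v\,{}^t\!\varphi - \overline\varphi\,v^{\ast}$.

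The computation is entirely routine bookkeeping; there is no real obstacle beyond keeping the index placement and the $e^{\pm u}$ factors straight, and being careful that the diagonal (scalar-matrix) terms arising from $\tilde\gamma$ are precisely what is needed to match the known formula for $\tilde\theta$ so that $\tilde\theta_1$ has the stated clean form. I would close by remarking that the formulas for $\tilde\theta_1$ and $\tilde\theta_2$ are consistent with the transformation rules $\tilde\theta_1 = P\theta_1 P^{-1} + dP\,P^{-1}$ and $\tilde\theta_2 = \overline P\,\theta_2 P^{-1}$ with $P = e^{-u}I$ only up to the extra torsion-dependent terms, which is expected since the frame change here is conformal rather than a constant unitary change — this is a sanity check rather than part of the proof.
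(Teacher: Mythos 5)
Your proof is correct and is exactly the paper's argument: the paper's entire justification for this lemma is to substitute the torsion transformation rule $e^u\tilde{T}^i_{jk}=T^i_{jk}+u_j\delta_{ik}-u_k\delta_{ij}$ and the rescaled coframe $\tilde{\varphi}=e^u\varphi$ into the unitary-frame formulas of Lemma \ref{lemma 2}, with the diagonal $(\partial u-\overline{\partial}u)I$ terms from $\tilde{\gamma}$ cancelling against the conformal change of $\theta$, precisely as you describe. (Your closing "sanity check" is the only shaky point, since the frame-change transformation rules quoted in \S 2 apply to a fixed metric and are not expected to govern a change of metric, but you correctly flag it as not part of the proof.)
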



Now we are ready to prove Theorem \ref{conformalchange}.


\begin{proof}
[\textbf{Proof of Theorem \ref{conformalchange}}] When $M^n$ is a
compact complex manifold, by Theorem \ref{balancedresult},
K\"ahler-like or G-K\"ahler-like metrics are balanced, and balanced
metrics are clearly unique (up to constant multiples) within each
conformal class, so each conformal class of Hermitian metrics on
$M^n$ can contain at most one (up to constant multiples)
K\"ahler-like or G-K\"ahler-like metric.

\vsv

Now assume that $(M^n,g)$ is a non-compact Hermitian manifold. Let
$u$ be a real-valued smooth function on $M^n$ and
$\tilde{g}=e^{2u}g$ be a conformal change of $g$. As in the above,
let $e$ be a local unitary frame of $g$ with dual coframe $\varphi$,
then $\tilde{e}=e^{-u}e$ and $\tilde{\varphi }=e^u\varphi $ are
unitary frame and coframe for $\tilde{g}$. We have $\tilde{\Theta
}=\Theta - 2 \partial \overline{\partial } uI$. So when $g$ is
K\"ahler-like, which means that $\ ^t\Theta \wedge \varphi =0$,
the metric $\tilde{g}$ will be K\"ahler-like if and only $\
^t\tilde{\Theta }\wedge \tilde{\varphi }=0$, which is equivalent to
$\ \partial \overline{\partial } u =0$.

\vsv

Next let us assume that $\tilde{\Theta}_2 - \Theta_2=0$. By Lemma
\ref{lemma 11} and a somewhat lengthy but straight forward
computation, we get the following equations for $\lambda = e^{-u}$:
\begin{eqnarray*}
& & e_i(\lambda_j) - \theta_{jk}(e_i)\lambda_k + T^k_{ij}\lambda_k =0, \\
& & \overline{e}_i(\lambda_j) - \theta_{jk} (\overline{e}_i)\lambda_k -
\overline{T}^j_{ik}\lambda_k - T^i_{jk} \overline{\lambda}_k \  =
\  2\delta_{ij} |\lambda_k|^2 /\lambda
\end{eqnarray*}
for any $i$, $j$. Note that the index $k$ is summed up in the above
identities. Let $H_{\lambda }$ be the Hessian of the function
$\lambda$ with respect to the Riemannian metric $g$, then the above
equations are simply saying that $$ H_{\lambda}(X,Y)=0, \ \ \ \ \ \
\lambda H_{\lambda}(X, \overline{Y})= \langle X, \overline{Y}
\rangle |\nabla \lambda |^2 $$ for any type $(1,0)$ tangent vectors
$X$ and $Y$. In particular, one has $\lambda \Delta \lambda =
n|\nabla \lambda |^2$, and $\Delta e^{(n-1)u}=0$, where $\Delta
\lambda$ and $\nabla \lambda$ are the Laplacian and gradient of
$\lambda$ with respect to the Riemannian metric $g$. This completes
the proof of Theorem \ref{conformalchange}.
\end{proof}


As a consequence, since there is no non-constant positive harmonic
function on the Euclidean space, we know any Hermitian metric
conformal to the complex Euclidean metric $g_0$ on ${\mathbb C}^n$
cannot be G-K\"ahler-like unless it is a constant multiple of $g_0$.
The same is true for any G-K\"ahler-like manifold $(M^n,g)$ that is
complete and with nonnegative Ricci curvature, for exactly the same
reason.

\vsv

On the other hand, by Theorem \ref{conformalchange}, we could draw
the following conclusion:

\vsss

\noindent {\bf Example (G-K\"ahler-like metrics conformal to the
Euclidean metric).} Let $M^n\subset {\mathbb C}^n$ be an open subset
not equal to  ${\mathbb C}^n$. Let $g_0$ be the restriction on $M^n$
of the complex Euclidean metric. For any $p\in {\mathbb
C}^n\setminus M$, one can check directly that the metric
$\tilde{g}=\frac{1}{|z-p|^4}g_0$ on $M^n$ is G-K\"ahler-like.
Conversely, if $\tilde{g}=e^{2u}g_0$ is G-K\"ahler-like on $M^n$, then
by Theorem 4, we know that the function $\lambda = e^{-u}$ satisfies
$$ \frac{\partial^2\lambda }{\partial z_i \partial z_j} =0, \ \ \ \ \ \ \
\lambda \frac{\partial^2\lambda}{ \partial z_i \partial \overline{z}_j}
= 2\delta_{ij}\sum_{k=1}^n |\lambda_i|^2.$$
From this it follows that there must be a constant $c>0$ and a point
$p\in {\mathbb C}^n\setminus M$ such that $\lambda = c|z-p|^2$, hence
$e^{2u} = \frac{1}{c^2|z-p|^4}$.

\vsss

Our next goal is to introduce the right notion of bisectional
curvature and holomorphic sectional curvature. The novelty here is
only the definition of (Riemannian) bisectional curvature.

\vsv

We have two natural candidates for defining the Riemannian
bisectional curvature, namely, $R_{X\overline{X}Y\overline{Y}}$ and
$R_{X\overline{Y}Y\overline{X}}$. In the K\"ahler case, or more
generally the G-K\"ahler-like case, they are equal to each other,
and in general, their difference is \[ R_{XY\overline{XY}} =
R_{X\overline{X}Y\overline{Y}}- R_{X\overline{Y}Y\overline{X}}.\]
This gives us a one-parameter family of choices of Riemannian
bisectional curvature $B_a$ for any real number $a$:

\begin{definition}[\textbf{Bisectional curvatures}]

Given a Hermitian manifold $(M^n,g)$, and given any two non-zero
type $(1,0)$ tangent vectors $X$, $Y$ at $p$ in $M$,  the
(Hermitian) bisectional curvature $B^h(X,Y)$ and the Riemannian
bisectional curvature $B_a(X,Y)$ in the directions of $X$ and $Y$
are defined as \[ B^h(X,Y)=  \frac{
R^h_{X\overline{X}Y\overline{Y}} } {|X|^2|Y|^2}, \ \ \ \ B_a(X,Y)=
\frac{  a R_{X\overline{X} Y \overline{Y}}
 + (1-a) R_{X\overline{Y}Y\overline{X}} }  {|X|^2|Y|^2}.\]
The (Hermitian)
holomorphic sectional curvature and Riemannian holomorphic sectional
curvature in the direction of $X$ are defined by $H^h(X)=B^h(X,X)$ and
$H(X)=B_a(X,X)$, respectively.
\end{definition}


Note that $B_a(X,Y)$ and $B^h(X,Y)$ are both real valued, and $B_a(X,Y)=B_a(Y,X)$, but in general $B^h(X,Y)\neq B^h(Y,X)$. When the metric is K\"ahler-like, $B^h$ is symmetric, and when the metric is G-K\"ahler-like, $B_a$ is independent of $a$.

\vsv

The Riemannian bisectional curvature $B_a$ gives us a couple of
Ricci type curvature tensor: \[ Ric_a(X) = \sum_{i=1}^n B_a(X, e_i)
= a Ric_1(X) + (1-a) Ric_0(X) \] where $e$ is a unitary frame. Clear
they are independent of the choice of the unitary frame.

\begin{lemma}  \label{lemma 12}
On a Hermitian manifold $(M^n,g)$, if $X=\frac{1}{\sqrt{2}} (u-iJu)$
and $Y=\frac{1}{\sqrt{2}} (v-iJv)$, where $u$ and $v$ are real
tangent vectors, then we have
\begin{equation*}
 -R_{X\overline{X} Y \overline{Y}}  +2R_{X\overline{Y}Y\overline{X}} = - \frac{1}{2} \{ R(u,v)+R(Ju,Jv)+R(Ju,v)+R(u,Jv)\}
 \end{equation*}
where $R(u,v)$ stands for $R_{uvuv}$. Therefore
\begin{equation*}
 B_{-1}(X,Y) = \frac{1}{2}\sin^2\phi_{uv}\{ K_{u\wedge v} + K_{Ju\wedge Jv}\}   + \frac{1}{2}\sin^2\phi_{uJ\!v}\{ K_{Ju\wedge v} + K_{u\wedge Jv}\}
 \end{equation*}
where $K_{u\wedge v}=-R(u,v)/|u\wedge v|^2$ is the sectional
curvature of the plane spanned by $u$ and $v$, and $\phi_{uv}$
denotes the angle between $u$ and $v$. In particular, if $(M^n,g)$
has positive (negative, nonnegative, or nonpositive)  sectional
curvature, then it will have positive (negative, nonnegative, or
nonpositive) Riemannian bisectional curvature $B_{-1}$.

\end{lemma}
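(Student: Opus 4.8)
The plan is to reduce everything to the real Riemannian curvature tensor evaluated on the four real vectors $u$, $Ju$, $v$, $Jv$, and then to use only the universal symmetries of such a tensor — skew-symmetry in the first and in the last pair of slots, the pair-interchange symmetry, and the first Bianchi identity — together with the fact that $J$ is a $g$-isometry. Write $p=u$, $q=Ju$, $r=v$, $s=Jv$, so that $X=\frac{1}{\sqrt 2}(p-iq)$, $\overline{X}=\frac{1}{\sqrt 2}(p+iq)$, and likewise for $Y$, $\overline{Y}$. Note that Gray's identity $R_{XYZW}=0$ is not needed here: the first assertion is a purely multilinear identity, valid for any curvature tensor obeying the Riemannian symmetries.

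First I would expand $R_{X\overline{X}Y\overline{Y}}$. Expanding the conjugate pair in the first two slots, the terms with repeated $p$ or repeated $q$ drop out by skew-symmetry, leaving $2i$ times the curvature carrying $p,q$ in those slots; doing the same on slots three and four gives $R_{X\overline{X}Y\overline{Y}}=\frac12 i^2 R_{p,q,r,s}\cdot 2=-R_{p,q,r,s}$, that is, $-R_{X\overline{X}Y\overline{Y}}=R_{u,Ju,v,Jv}$. Next I would expand $R_{X\overline{Y}Y\overline{X}}$ into its sixteen terms. The eight terms carrying an odd power of $i$ are purely imaginary and cancel in pairs once one applies skew-symmetry and pair-interchange (for instance $R_{p,r,r,q}=R_{q,r,r,p}$) — which also confirms that $R_{X\overline{Y}Y\overline{X}}$ is real. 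Among the remaining eight real terms, the four ``diagonal'' ones reassemble into $-\frac14\{R(u,v)+R(u,Jv)+R(Ju,v)+R(Ju,Jv)\}$ in the notation $R(u,v)=R_{uvuv}$, while the four ``mixed'' ones collapse, via pair-interchange, to $\frac12(R_{p,r,s,q}-R_{p,s,r,q})$; a single use of the first Bianchi identity, cyclic in the last three slots, rewrites $R_{p,r,s,q}-R_{p,s,r,q}=-R_{p,q,r,s}$. Hence $2R_{X\overline{Y}Y\overline{X}}=-\frac12\{R(u,v)+R(u,Jv)+R(Ju,v)+R(Ju,Jv)\}-R_{p,q,r,s}$, and adding $-R_{X\overline{X}Y\overline{Y}}=R_{p,q,r,s}$ cancels the last term and gives the claimed identity.

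For the second formula I would first record that $|X|^2=\langle X,\overline{X}\rangle=|u|^2$ (using $\langle u,Ju\rangle=0$ and $|Ju|=|u|$) and similarly $|Y|^2=|v|^2$, so $B_{-1}(X,Y)$ equals the expression just obtained divided by $|u|^2|v|^2$. Then I would rewrite each sectional-type term $R_{abab}=R(a,b)$ as $-K_{a\wedge b}\,|a\wedge b|^2$ and use that $J$ is an isometry to pin down the Gram data: $|Ju\wedge Jv|=|u\wedge v|$ and $\phi_{Ju,Jv}=\phi_{u,v}$, while $\langle Ju,v\rangle=-\langle u,Jv\rangle$ forces $\sin^2\phi_{Ju,v}=\sin^2\phi_{u,Jv}$, the common value being what is written $\sin^2\phi_{uJ\!v}$. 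Grouping the $u\wedge v$ and $Ju\wedge Jv$ contributions (each carrying the factor $\sin^2\phi_{uv}$) and the $Ju\wedge v$ and $u\wedge Jv$ contributions (each carrying $\sin^2\phi_{uJ\!v}$) then produces the stated formula for $B_{-1}(X,Y)$. The sign corollary is immediate afterwards: $\sin^2\ge 0$ and, for $X,Y\neq 0$, at least one of $\sin^2\phi_{uv}$, $\sin^2\phi_{uJ\!v}$ is strictly positive, since their simultaneous vanishing would force $v\parallel u$ and then $u\parallel Ju$, impossible for $u\neq 0$; so $B_{-1}(X,Y)$ inherits the sign of the sectional curvature.

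The main obstacle is purely bookkeeping: organizing the sixteen-term expansion of $R_{X\overline{Y}Y\overline{X}}$ and tracking signs carefully when applying skew-symmetry, pair-interchange, and the Bianchi identity — in particular, making sure the imaginary contributions really do cancel, and that exactly one invocation of the first Bianchi identity, on the mixed block, is what is required. No deeper ingredient enters; everything comes from the symmetries of $R$ and from $J$ being $g$-orthogonal.
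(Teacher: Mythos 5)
Your proposal is correct, and it is essentially the computation the paper has in mind: the paper's proof of this lemma consists only of the sentence ``A straightforward computation leads to the above identities,'' and your expansion in the real basis $u, Ju, v, Jv$, using the Riemannian symmetries, one application of the first Bianchi identity on the mixed block, and the $J$-isometry facts $\sin^2\phi_{Ju,Jv}=\sin^2\phi_{uv}$ and $\sin^2\phi_{Ju,v}=\sin^2\phi_{u,Jv}$, is exactly that computation carried out in detail. I verified the key steps ($-R_{X\overline{X}Y\overline{Y}}=R_{u,Ju,v,Jv}$, the cancellation of the imaginary terms, the mixed block collapsing to $\tfrac12(R_{prsq}-R_{psrq})=-\tfrac12 R_{pqrs}$, and $|X|^2=|u|^2$), and they all hold.
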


\begin{proof}
A straightforward computation leads to the above identities.
\end{proof}

In particular, we have
\begin{equation}
Ric_{-1}(X) = -Ric_0(X)+2Ric_1(X)  = \frac{1}{2} \{ Ric(u) + Ric(Ju)\}
\end{equation}
where $Ric(u)$ stands for the Ricci curvature in the direction of
$u$. This means that in the non-K\"ahler case, the trace of the
Riemannian bisectional curvature $B_{-1}$ is only the $J$-invariant part of
the Ricci curvature, which may not control the full Ricci curvature
tensor, even though the scalar curvature is controlled by it:
\begin{equation}
\sum_{i, j=1}^n B_{-1}(e_i,e_j) = \frac{1}{2} Scal
\end{equation}
where $\{ e_i\}$ is any unitary frame and $Scal$ stands for the scalar
curvature of the Riemannian metric $g$.

\vsss

Next we want to examine the relationship between $B_a(X,Y)$ and
$B^h(X,Y)$. As a direct consequence of the definitions and Lemma
\ref{lemma 7}, we get through a direct computation that the
following holds:

\begin{theorem} \label{monotonicity}
For any type $(1,0)$ tangent vectors $X$, $Y$ at a point $p$ in a
Hermitian manifold $(M^n,g)$, it holds
\begin{eqnarray}
\frac{1}{2}(R^h_{X\overline{X}Y\overline{Y}} +
R^h_{Y\overline{Y}X\overline{X}}) -
R_{X\overline{Y}Y\overline{X}}
& = & \sum_{k=1}^n \{ |T^k_{XY}|^2 + 2Re(T^Y_{kY} \overline{T^X_{kX}}) \}, \\
R^h_{X\overline{Y}Y\overline{X}} - R_{X\overline{X}Y\overline{Y}} &
= & \sum_{k=1}^n \{ |T_{kX}^Y|^2 + |T_{kY}^X|^2 - |T_{XY}^k|^2\},
\end{eqnarray}
where $\{e_i\}$ is a unitary frame and $T^X_{YZ} = \sum_{i,j,k=1}^n
T^i_{jk} \overline{X}_iY_jZ_k$ if $X=\sum_{i=1}^n X_ie_i$,
$Y=\sum_{i=1}^n Y_ie_i$, and $Z=\sum_{i=1}^n Z_ie_i$. In particular,
the holomorphic sectional curvature satisfies the monotonicity
condition
\begin{equation}
R^h_{X\overline{X}X\overline{X}} - R_{X\overline{X}X\overline{X}} \
= \ 2 \sum_{k=1}^n  |T^X_{kX}|^2  \ \geq \ 0.
\end{equation}
Moreover, if the equality always holds, then $T=0$ and $g$ is
K\"ahler.
\end{theorem}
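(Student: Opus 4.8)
The whole theorem is packaged into Lemma~\ref{lemma 7}: identities (\ref{formula 21}) and (\ref{formula 24}) already express, pointwise and in any unitary frame, both curvature tensors in terms of the torsion components $T^i_{jk}$ and their first covariant derivatives. So the plan is essentially computational. Fix a point $p$, a local unitary frame $e$, and write $X=\sum_iX_ie_i$, $Y=\sum_iY_ie_i$. Contracting the tensor identity (\ref{formula 24}) with $X,\overline X,Y,\overline Y$ inserted so as to produce $R_{X\overline Y Y\overline X}$ on the left yields the first displayed identity of the theorem, and the contraction producing $R_{X\overline X Y\overline Y}$ yields the second; the monotonicity identity is then the specialization $Y=X$, and the final (``moreover'') assertion follows by a polarization argument.

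In carrying out these contractions, (\ref{formula 24}) splits into three kinds of terms: a leading Hermitian-curvature term; two terms built from first covariant derivatives of the torsion, coming from $T^j_{ik,\overline l}$ and $\overline{T^i_{jl,\overline k}}$; and three purely quadratic torsion terms. The quadratic terms are pure bookkeeping: after summation, and using only the skew-symmetry $T^k_{ij}=-T^k_{ji}$, they reorganize into exactly the squared norms $|T^k_{XY}|^2$, $|T^Y_{kX}|^2$, $|T^X_{kY}|^2$ and the cross term $\re(T^Y_{kY}\overline{T^X_{kX}})$ on the right-hand sides. The one step that is not automatic --- and which I expect to be the main obstacle --- is disposing of the two derivative terms, which a priori are not curvature quantities at all. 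Here I would invoke (\ref{formula 21}): since $2T^k_{ij,\overline l}=R^h_{j\overline l i\overline k}-R^h_{i\overline l j\overline k}$, each derivative term becomes, upon contraction, a difference of two components of $R^h$. After substituting and recombining with the leading Hermitian-curvature term --- using only the symmetries $R^h$ always enjoys, namely skew-symmetry within the first pair and within the last pair of indices together with the reality relation $\overline{R^h_{a\overline b c\overline d}}=R^h_{b\overline a d\overline c}$ --- the Hermitian-curvature contributions collapse precisely to the combinations $\tfrac12(R^h_{X\overline XY\overline Y}+R^h_{Y\overline YX\overline X})$ and $R^h_{X\overline Y Y\overline X}$ on the left-hand sides. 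Keeping track of the signs through this recombination is the crux of the argument.

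The monotonicity identity is then immediate: put $Y=X$ in the second identity. Since $T^k_{XX}=\sum_{i,j}T^k_{ij}X_iX_j=0$ by the skew-symmetry of the torsion, the term $|T^k_{XY}|^2$ drops out, while the two surviving terms are equal and sum to $2\sum_k|T^X_{kX}|^2$, which is manifestly real and nonnegative (note that $R^h_{X\overline XX\overline X}$ and $R_{X\overline XX\overline X}$ are themselves real, so this is an identity between real numbers). Finally, suppose the equality $R^h_{X\overline XX\overline X}=R_{X\overline XX\overline X}$ holds for every $X$ at every point of $M$. Then $\sum_k|T^X_{kX}|^2\equiv 0$, hence $T^X_{kX}=\sum_{i,j}T^i_{kj}\overline X_iX_j=0$ for every index $k$ and every type $(1,0)$ vector $X$. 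Polarizing this vanishing sesquilinear form in $X$ --- it suffices to evaluate at $X=e_p$, then $X=e_p+e_q$, then $X=e_p+\sqrt{-1}\,e_q$ --- forces $T^i_{kj}=0$ for all indices, i.e.\ $\tau\equiv 0$. By Lemma~\ref{lemma 3} this means $g$ is K\"ahler, which completes the proof.
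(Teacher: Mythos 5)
Your proposal is correct and follows essentially the same route as the paper, which derives the theorem by ``a direct computation'' from Lemma~\ref{lemma 7}: you contract identity (\ref{formula 24}), eliminate the covariant-derivative terms via (\ref{formula 21}) and the reality/skew symmetries of $R^h$, specialize $Y=X$ for the monotonicity inequality, and polarize the vanishing sesquilinear form $T^X_{kX}$ to conclude $T=0$ in the equality case. The bookkeeping you outline does close up as claimed (e.g.\ the cross terms $-T^X_{rX}\overline{T^Y_{rY}}-T^Y_{rY}\overline{T^X_{rX}}$ produce exactly the $2\mathrm{Re}$ term in the first identity), so no gap.
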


Notice that if we write $x=\frac{1}{2} (X+\overline{X})$ and
$y=\frac{1}{2} (Y+\overline{Y})$, then  $\sum_{k=1}^n |T^X_{kY}|^2 =
2 ||(\nabla_x J)(y)||^2$. So the difference between the holomorphic
sectional curvatures is measured by the norm square of the covariant
differentiation of the almost complex structure. Note that $\nabla
J=0$ means that $g$ is K\"ahler.

\vsv

Formula (41) is particularly interesting. It says that the
difference between the symmetrized Hermitian bisectional curvature
and the Riemannian bisectional curvature $B_0$ is a quadratic
expression of the torsion tensor, and it does not involve the
derivatives of the torsion. Perhaps we should use $B_0$ to be the
Riemannian bisectional
 curvature, even though it is not clear to us whether $B_0$ can be
 expressed as a positive linear combination of sectional curvature
 terms as in the K\"ahler case.

\vsv

For Ricci curvature tensors, Liu and Yang wrote a nice paper recently
 \cite{Liu-Yang1} in which they systematically studied all 6 possible
  Ricci tensors, and wrote down their explicit relationship. So we
will not get into Ricci or scalar curvature here.

\vsv

To close this article, let us leave the readers with the following
vague question, namely, can we further study K\"ahler-like and
G-K\"ahler-like metrics on compact non-K\"ahlerian complex manifold
of dimension $3$ that is Calabi-Yau, that is, with trivial canonical
line bundle and finite fundamental group? Is there a role that
K\"ahler-like or G-K\"ahler-like metrics can play in the Strominger
system (\cite{Strominger}, \cite{Fu-Yau}) on such manifolds?

\vs

\noindent\textbf{Acknowledgments.} We would like to thank Bennett
Chow, Gabriel Khan, Kefeng Liu, Lei Ni, Hongwei Xu, and Xiaokui Yang
for their interests and encouragement. We also thank Qingsong Wang
who carefully read the manuscript and pointed out a number of
inaccuracies. The authors are grateful to an anonymous referee for
many helpful suggestions which improved the exposition of the paper.


\end{document}